\numberwithin{equation}{section}
\newcommand\blfootnote[1]{%
  \begingroup
  \renewcommand\thefootnote{}\footnote{#1}%
  \addtocounter{footnote}{-1}%
  \endgroup
}
\newtheorem{thm}{Theorem}[section]
 \newtheorem{cor}[thm]{Corollary}
 \newtheorem{lem}[thm]{Lemma}
 \newtheorem{prop}[thm]{Proposition}
  \newtheorem{prob}{Problem}
 \theoremstyle{definition}
 \newtheorem{defin}{Definition}[section]
 \newtheorem{example}[thm]{Example} 
 \newtheorem{remar}{Remark}[section] 
 \newtheorem{algorithm}{Algorithm}[section]
\DeclareMathOperator{\im}{Im}
\DeclareMathOperator{\re}{Re}
\DeclareMathOperator{\Arc}{Arc}
\DeclareMathOperator{\supp}{supp}
\DeclareMathOperator{\dist}{dist}
\DeclareMathOperator*{\argmax}{arg\,max}
\def\RR{\mathbb R}
\def\CC{\mathbb C}
\def\NN{\mathbb N}
\def\ZZ{\mathbb Z}
\def\DD{\mathbb D}
\def\TT{\mathbb T}
\def\one{\mathbbm{1}}
\def\distPH{\dist_{\mathrm{PH}}}
\def\al{\alpha}
\def\vphi{\varphi}
\def\la{\lambda}
\def\th{\theta}
\def\de{\delta}
\def\ep{\epsilon}
\def\vep{\varepsilon}
\def\ga{\gamma}
\def\om{\omega}
\def\Om{\Omega}
\def\De{\Delta}
\def\A{\mathcal{A}}
\def\E{\mathbb{E}}
\def\P{\mathrm{Pr}}
\def\L{\mathcal{L}}
\def\bb{\mathfrak{b}}
\def\wt{\widetilde}
\def\wh{\widehat}
\def\ii{\mathrm{i}}
\def\dd{\mathrm{d}}
\def\ee{\mathrm{e}}
\def\Ar{\mathfrak{A}}
\def\FAr{\mathbb{A}}
\begin{document}

\title{Recovery of periodicities hidden in heavy-tailed noise}

\author{Illya M. Karabash, J\"urgen Prestin}

\date{}

\maketitle

\blfootnote{\noindent\textbf{Acknowledgments.}
 The authors are grateful to the anonymous referee for careful reading of the paper and a number of helpful remarks and questions.  The authors would like to thank Frank Filbir, Hrushikesh N. Mhaskar, and Boaz Nadler for valuable discussions. 
During various parts of this research, the authors were supported by the 
EU-financed projects
AMMODIT (''Approximation Methods for Molecular Modelling and Diagnosis Tools'',  
grant agreement MSCA-RISE-2014-645672-AMMODIT) and
EUMLS (''EU-Ukrainian Mathematicians for Life Sciences'', 
Marie Curie Actions - International Research Staff Exchange Scheme 
FP7-People-2011-IRSES, project number 295164);
the first named author was supported by the Alexander von Humboldt Foundation, by Hausdorff Research Institute for Mathematics, Bonn, and by
the project no.15-1vv{\textbackslash}19 
of Vasyl' Stus Donetsk National University.}

\begin{abstract}
We address a parametric joint detection-estimation problem 
for discrete signals of the form 
$x(t) = \sum_{n=1}^{N} \al_n \ee^{-\ii \la_n t } + \ep_t$, $t \in \NN$,
with an additive noise represented by
independent centered complex random variables $\ep_t$.
The distributions of $\ep_t$ are assumed to be unknown, but satisfying various sets of conditions.
We prove that in the case of a heavy-tailed noise it is possible to construct asymptotically
strongly consistent estimators for the unknown parameters of the signal, i.e.,
frequencies $\la_n$, their number $N$, and complex coefficients  $\al_n$.
For example, one of considered classes of noise is the following:
$\ep_t$ are independent identically distributed  random
variables with $\E (\ep_t) = 0$ and $\E (|\ep_t| \ln |\ep_t|) <  \infty$.
The construction of estimators is based on 
detection of singularities of anti-derivatives for 
$Z$-transforms and on a two-level selection procedure for special discretized versions
of superlevel sets. The consistency proof relies on the convergence theory for random Fourier series. We discuss also decaying signals and the case of infinite number of frequencies.
\end{abstract}

Keywords: Random Fourier series, Prony problem, sinusoids in noise, estimation of dimension, 
asymptotically consistent estimation, consistent localization \\[0.5ex]

MSC-classes: 
94A12, 
42A70,  
42A61, 
62F12, 
42A24

\tableofcontents  

\section{Introduction}
\label{s:intro}


Consider a signal modeled by the complex time series of the form
\begin{equation} \label{e x}
x(t) = \sum_{n=1}^{N} \al_n \ee^{-\ii \la_n t } + \ep_t , \qquad t \in \NN ,
\end{equation}
consisting of a finite number $N \in \ZZ^+$ of periodicities  $\al_n \ee^{-\ii \la_n t }$
with distinct real frequencies $\la_n \in [-\pi,\pi)$ and complex amplitudes $\al_n \neq 0$.
The signal is corrupted by a random noise $( \ep_t )_{t=1}^\infty$. 
The problem of finding, or estimation of the unknown parameters $N$, $\la_n$, and $\al_n$
from a finite number of samples $\left( x(t) \right)_{t=1}^{m} $ is a fundamental problem in
signal processing with a number of applications ranging from speech recognition and direction
finding in array antennas to
astrophysics, medicine, and economics (see \cite{B87,ZS01,SM05,FMP12} and references
therein).

When the number $N$ of periodicities is unknown, the number of spectral data is not bounded by a known finite number.
So the numerical recovery can be expected only via a convergent process with theoretically infinite number of steps. 
On each step the process has to use only a finite part 
of the signal and should give certain approximations to unknown parameters 
$N$, $z_n:=e^{\ii \la_n}$, and $\al_n$.  These approximations  $\wh N = \wh N (m)$,
$\wh z_n = \wh z_n (m)$, $\wh \al = \wh \al (m)$ are called estimators and, in the case of random noise, are functions of 
random variables $x(t)$, $t =1, \dots, m$.
An estimator is called (asymptotically) \emph{consistent} if it converges with $m \to +\infty$ to the corresponding parameter 
in a certain probabilistic sense. 

This paper is aimed on the case when the random noise variables $\ep_t$ are allowed to have \emph{heavy-tailed distributions} and studies  \emph{strong consistency} of estimators, which corresponds to almost sure convergence. The interest to the heavy-tailed $\ep_t$  is stimulated by the paper of Zhou and Sornette \cite{ZS01}, 
which numerically investigates the case of $\ep_t$ with infinite variances $\E (|\ep_t |^2)$ 
and lists a number of applications.
The latter includes vortices in freely decaying 2-D turbulence,
ion-signature precursors of earthquakes, 
and the price dynamics of speculative bubbles preceding
financial collapses. 

The most difficult part of the problem is usually the detection of the unknown number $N$ of periodicities.
The presently available analytic proofs of consistency 
require the assumption  $\E (|\ep_t|^4) < \infty$.
However, the numerical experiments of \cite{ZS01} suggest 
that consistent estimation is possible even in 
the case when variances $\E (|\ep_t |^2)$ are infinite.
 
The goal of this paper is to study analytically the case of independent identically distributed (i.i.d.) random variables 
$\ep_t$ with infinite variances and to prove that consistent estimation 
of all unknown parameters is possible under fairly mild assumptions on the distribution tails of
$\ep_t$.  

\textbf{Notation}. The following sets of real and complex numbers are used:
open half-lines $\RR_\pm = \{ x \in \RR: \pm x >0 \}$,
nonnegative and nonpositive integers $\ZZ^\pm = \pm \NN \cup \{ 0\}$,
the unit circle $\TT =\{z \in \CC : |z|=1\}$, and the unit disc $\DD = \{z \in \CC : |z|<1\} $.
When we consider the real interval $[\th_1,\th_2]$, or arcs 
\[
\Arc [\th_1,\th_2] := \{ \ee^{\ii \th } \ : \ \th_1 \le \th \le \th_2 \}  \subset \TT, 
\]
we assume that $\th_1 \le \th_2$.
%
For $\zeta \in \CC$ and complex sets $G_{1,2}$,
\[
G_1 + G_2 := \{ z_1 +z_2 \ : \ z_1 \in G_1, z_2 \in G_2 \} ,
\qquad \zeta G_1 := \{ \zeta z \ : \ z \in G_1 \}.
\]

Let $\one := (u_t)_{t=1}^{+\infty} \in \CC^\NN$ with $u_t =1$ for all $t \in \NN$.
The notation $ \lceil s \rceil $ stands for the ceiling function, i.e., the smallest integer not less than $s$;
$\ln^\de s$ is $(\ln s)^\de$.   
 By $\L^p $ and $\| \cdot \|_p$ we denote the standard Lebesgue spaces of 
complex-valued functions and the
corresponding norms, respectively. 

\section{Main results and employed techniques}


The problem of hidden sinusoids has a long history with periodicities in orbital data 
detected numerically as early as 1754 \cite{B87}.
The classical statement of the problem involves the discrete signal of form
$
x(t) = \sum_{n=1}^{N} \al_n \ee^{-\ii \la_n t}
$
equidistantly  sampled over some finite set of $t$. Initially, the number $N$ of  frequencies
was supposed to be known and finite.
Perhaps, the oldest analytic algorithm dates back to G.R. de Prony
(see e.g. \cite{PT14}). The Prony's method and the essentially equivalent annihilating
filter method have a number of advantages. They
utilize the minimal possible number of observations,
work in the case of damped oscillations, and give exact result in
the absence of noise.
Besides the assumption that $N$ is given, it is usually assumed that
the essential drawback of Prony's method is poor performance when data are too noisy. 

For the case of \emph{a priori} knowledge of a bound on
the number $N$ of frequencies, a number of deterministic and statistical methods
were developed to deal with signals of type (\ref{e x})
involving various models of uncertain or random noise $\ep_t$.
Let us mention various modifications and developments of Prony's and ESPRIT methods
\cite{RK89,PT10,PT13,PP13,PT14}
(in deterministic settings the ESPRIT method of \cite{PRK85,RK89}
has some common ideas with Prony's methods, see discussion in \cite{PT14}),
the approach of Jones, Njåstad, and Saff employing 
Wiener-Levinson filters and Szegö polynomials
\cite{JNS90,PS92,P96,NW97,A07,FMP12}, and a variety of statistical methods
\cite{B87,RK89,SMFS89,QH01,SM05}
(for recent developments on the multivariate case see \cite{KF13,PT13_mult,KPRvdO15,KMvdO16}).

The problem of determination of the number $N$ of sinusoids attracted
attention in mid-80s \cite{B87,SM05} (it was placed in
the list of open problems at the end of \cite{B87}).
Several statistical methods have been developed to obtain asymptotically consistent
estimators for $N$ and the parameters of sinusoids (see
\cite{F88,KH94,QH01,PdLvH07,KN09,NK11,KF13}
and the reviews in \cite{SM05,NK11}).

While most of studies work
with a white Gaussian noise with a known or unknown standard deviation,
in many practical situations the distribution of noise is unknown. Some
steps for lifting of Gaussian and independency assumptions have been done.
Kavalieris and Hannan \cite{KH94} deal with a `colored` autoregressive noise with
innovations sequence $\vep (t)$ satisfying $\E (\vep(t)^4) < \infty$ and prove that 
their estimator provides strongly consistent detection of the number $N$ of sinusoids.
The performance of periodogram and information criteria methods in presence of 
non-Gaussian $\ep_t$ was studied numerically by Zhou and Sornette \cite{ZS01} and 
Nadler and Kontorovich \cite{NK11}.
Under the assumption that $N$ is known, the question of probability estimates for localization of $\la_n$ in the presence of a non-Gaussian noise was raised recently in \cite{FMP12} in connection with a modified method of orthogonal polynomials and the estimation techniques of \cite{MP00}.

Presently there exists a gap between assumptions of analytically proved consistency 
results and numerical evidences of the fact that many 
of statistical methods perform well under weaker 
restrictions on noise. 

The main goal of the present paper is to fill this gap
and to certify analytically that recovery of the unknown parameters   
is possible in the case of a heavy-tailed noise.
We prove rigorously that strongly consistent estimation of the parameters
$N \in \ZZ^+$, $\la_n \in [-\pi,\pi)$, and $\al_n \in \CC \setminus \{0\}$ 
of the signal (\ref{e x})
is possible in presence of complex random noise sequences 
$( \ep_t )_{t=1}^{\infty}$ belonging to the following classes:
\begin{itemize}
\item[(N1)] the random variables $\ep_t$ are i.i.d. with $\E (\ep_t) = 0$
and $\E (|\ep_t| \log |\ep_t|) < \infty$ for all $t$;
\item[(N2)] $\ep_t$ are i.i.d. symmetric and 
satisfy $\E \left( |\ep_t| \log \log (|\ep_t|+\ee) \right) < \infty$.
\end{itemize}

Besides of heavy-tailed $\ep_t$, we consider also the case when variances of $\ep_t$ are finite, 
but not uniformly bounded in $t$. In this case, the consistency is proved under the assumption of sub-linear 
growth of $\E (|\ep_t|^2)$. More precisely, we will consider the following class of noise sequences:
\begin{itemize}
\item[(N3)] $\ep_t$ are independent, symmetric, and have finite variances  $\E (|\ep_t|^2) $ 
satisfying \linebreak
$\E (|\ep_t |^2) = O (t^{\nu})$ as $t \to +\infty$ for certain $\nu < 1$.
\end{itemize}

While our approach has some common features with 
the classical methods (see e.g. \cite{SM05}) involving detection of peaks of
periodograms and Welch temporal windows, 
it has the following novelties:
\begin{itemize}
\item We are aimed at the detection of singularities
of the \emph{anti-derivative} $S_x $ of the $Z$-transform of $\left( x(t) \right)_{t=1}^{+\infty}$.
It is easy to see that the anti-derivative of the deterministic signal
$y(t)=\sum_{n=1}^{N} \al_n \ee^{-\ii \la_n t }$ defined by 
\[
\text{$S_y (\ee^{\ii \th}):=
\sum_{t=1}^{+\infty} \ee^{\ii t \th} y(t) / t $}
\]
 has logarithmic singularities
at the points $\ee^{\ii \la_n}$ of the unit circle $\TT = \{\ee^{\ii \th} : \th \in \RR \}$.

\item Strong consistency is proved with the use of uniform convergence theorems for 
\[
\text{the random
 series $S_\ep (\ee^{\ii \th}):=
\sum_{t=1}^{+\infty} \ee^{\ii t \th} \ep_t / t $ corresponding to the noise part 
of $S_x$.
}
\]
This allows us to avoid the estimation of distribution tails  
of $\L^\infty$-norms of random trigonometric polynomials 
generated by $\ep_t$,
which is not adequately studied in the non-subexponential case.
One of the main points of this paper is to show that the results of Kahane \cite{K85} and 
Cuzick and Lai \cite{CL80} on uniform 
convergence of random Fourier series are powerful enough for estimation
of parameters in the presence of a heavy-tailed noise.

\item The estimators for $\la_n$ are generated not by peaks of partial sums of $S_x$, but by
discretized versions of superlevel sets associated with special
temporal windows, i.e., with partial sums 
\begin{equation} \label{e SAxm}
S^A_{x,m} (\ee^{\ii \th}) := \sum_{t=1}^{m}  \frac{a_{m,t}}{t} x_t \ee^{\ii t \th} 
\end{equation}
defined by a special summation matrix $A= (a_{m,t})_{m,t=1}^{+\infty}$, see 
Sections \ref{ss:model_wind} 
and  Proposition \ref{ss:decrKer}.

\item While the type of summation is not important for consistent localization 
of isolated frequencies
(see Section \ref{ss:loc}), it
becomes essential  for asymptotically consistent detection of frequencies and
estimation of their number (Sections \ref{s stab}). 
Indeed, the noise part $S^A_\ep$ and possible side lobes of summation kernels
will produce peaks and superlevel sets 
of $|S^A_x|$ that do not directly correspond
to frequencies $\la_n$,
but rather lie nearby and accompany the superlevel sets containing the frequencies.

\item To filter out such \emph{side superlevel sets} we develop 
in Section \ref{s stab}
a two-level selection approach and employ special kernels with 'almost-monotonicity' property
(see Section \ref{ss:decrKer}).
\end{itemize}

\section{Consistent localization of frequencies}
\label{s:loc}

\subsection{Main setting and temporal windows}
\label{ss:model_wind}

Consider a discrete signal $x $ of the form (\ref{e x}) with a random noise 
$( \ep_t )_{t=1}^{+\infty}$
consisting of independent random variables  $\ep_t$ 
defined on a complete probability space $(\Om,\A,\P)$.
We assume that 
the set $\{ \la_n \}_{n=1}^{N }$ consists of a finite
number $N \in \ZZ^+$ of distinct real frequencies $\la_n \in [-\pi,\pi)$.
With the deterministic part $y$ of the signal we 
associate the complex Borel measure 
\[
\mu = \sum_{n=1}^N \al_n \de (z- \ee^{\ii \la_n})
\]
on the unit circle $\TT $ consisting of (complex) point masses 
$\al_n \in \CC \setminus \{0\}$ placed at points 
\[
z_n = \ee^{\ii \la_n} .
\]
So every frequency $\la_n$ corresponds to  the $\de$-function term 
$\al_n \de (z- z_n )$ and 
the  \emph{support of the measure} is given by 
\[ \text{
$\supp \mu = \{ z_n \}_{n=1}^N $ \quad  if $N>0$ \quad and  \quad 
$\supp \mu = \varnothing$ \quad if $N=0$.
} \]

Then the signal $x$ can be written in the form
\[
x(t) =y(t)+\ep_t , \qquad \text{ where }  y(t)=\int_\TT z^{-t} \dd \mu .
\]

The problem under consideration 
is to develop consistent estimators that recover 
either the support of $\mu$,
or completely the measure $\mu$  from one sample 
of the random vector 
$x=( x(t) )_{t=1}^{+\infty} = ( x (t,\om) )_{t=1}^{+\infty}$, $\om \in \Om$,   
under certain additional assumptions on random noise components $\ep_t (\om)$.
Our goal is to construct \emph{strongly consistent} \emph{estimators} 
$\wh N = \wh N (m,\om)$, $\wh z_n =\wh z_n (m,\om) $, and 
$\wh \al_n =\wh \al_n (m,\om)$, which, by definition, depend 
only on the finite parts $( x(t) )_{t=1}^{m} $ of the signal and 
have to  converge with $m \to +\infty$ to true spectral data 
$ N $, $z_n $, and 
$\al_n $ on 
a certain almost sure (a.s.) event of the probability space.

With an arbitrary signal $u = \left( u(t) \right)_{t=1}^{+\infty} $, 
we associate  its weighted modification $W= \left( u(t)/t \right)_{t=1}^{+\infty} $ 
and the geophysical $Z$-transform $ S_u (z) = \sum_{t=1}^{+\infty} z^t u(t) / t $ of $W$
defined as a function of a complex variable $z$ on the set of its convergence. 
The function $S_u$ is an anti-derivative of the
$Z$-transform $\sum_{t=0}^{+\infty} z^t u(t+1) $ 
of the shifted signal $\left(  u(t+1) \right)_{t=0}^{+\infty} $.

For the unilateral Fourier series $S_u (\ee^{\ii \th})$ we will use various
summation procedures defined via infinite summation matrices $A = (a_{m,t})_{m,t=1}^{+\infty}$. 
Partial sums associated with the matrix $A$ are given by
$
S_{u,m}^A (\ee^{\ii \th}) := \sum_{t=1}^{+\infty} a_{m,t} \frac{u_t}{t} \ee^{\ii t \th} .
$

It will be supposed that $(a_{m,t})$ satisfy the following conditions:
\begin{gather}
0< a_{m,1}  \le  1   \quad \text{ and }   a_{m,t}  \text{ is non-increasing in $t$ for every } m \in \NN ,
\label{e a 1} \\
 a_{m,t} \ge 0 \text{ if  } t \le m , \qquad   a_{m,t} = 0 \text{ if  } t > m ,
\label{e a 2}\\
 a_{m,t}  \to  1   \   \text{ as } m\to +\infty   \qquad \quad \text{ for every } t \in \NN ,
\label{e a 3} \\
 a_{m,t} \text{ is non-decreasing in $m$ for every } t \in \NN .
\label{e a 4}
\end{gather}

Under these conditions $A$-summation is \emph{regular}. 
That is, for any strongly convergent  series $V_0 = \sum_{t=1}^{+\infty} V_t$ 
(with $V_t$ from a certain Banach space),
the sequence of $A$-partial sums $\sum_{t=1}^m a_{m,t} V_t$ is also convergent to $V_0$,
which follows from the Toeplitz regularity test, see e.g. \cite[Theorem 3.2.2]{H49}.

We will need the following simple corollary of the regularity of $A$-summation.
\begin{lem} \label{l uni y}
Let $\mu $ be a complex Borel measure 
supported in a finite number of points 
of $\TT$. Then 
$S^A_{y,m} (\ee^{\ii \th})$ converge uniformly on 
each closed arc of $\TT$ disjoint with $\supp \mu$.
\end{lem}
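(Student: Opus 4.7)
My approach is to reduce the lemma to the case of a single frequency by linearity, and then recognize the desired uniform convergence as an instance of the $A$-summation regularity cited in the paper after condition (\ref{e a 4}), applied in the Banach space $C(K)$ of continuous functions with the uniform norm. Since $y(t) = \sum_{n=1}^N \al_n \ee^{-\ii \la_n t}$ and $u \mapsto S^A_{u,m}$ is linear, we have
\[
S^A_{y,m}(\ee^{\ii \th}) \;=\; \sum_{n=1}^N \al_n \sum_{t=1}^m a_{m,t}\, \frac{\ee^{\ii t(\th - \la_n)}}{t}.
\]
Fixing a closed arc $K \subset \TT \setminus \supp\mu$ and one frequency $\la_n$, the substitution $\vphi := \th - \la_n$ reduces the task to showing that $\wt T_m(\vphi) := \sum_{t=1}^m a_{m,t}\, \ee^{\ii t \vphi}/t$ converges uniformly on every closed set $\wt K \subset [-\pi,\pi]$ bounded away from $0$ modulo $2\pi$; summing the resulting estimates over $n=1,\dots,N$ then delivers the lemma.

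I would next invoke the uniform Dirichlet test: since $\bigl|\sum_{t=1}^T \ee^{\ii t \vphi}\bigr| \le 2/|1 - \ee^{\ii \vphi}|$ is uniformly bounded on $\wt K$ and $1/t$ decreases monotonically to zero, the ordinary partial sums $S_T(\vphi) := \sum_{t=1}^T \ee^{\ii t \vphi}/t$ are Cauchy in the Banach space $C(\wt K)$ and converge to some $f \in C(\wt K)$. Equivalently, the series $\sum_{t=1}^\infty V_t$ with $V_t(\vphi) := \ee^{\ii t \vphi}/t$ is strongly convergent in $C(\wt K)$. The $C(\wt K)$-valued form of the $A$-regularity then implies $\wt T_m = \sum_{t=1}^m a_{m,t} V_t \to f$ in $C(\wt K)$, i.e., uniformly on $\wt K$, finishing the proof.

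The main (and mild) obstacle is checking that the Banach-space version of the regularity applies. This is handled by Abel summation $\wt T_m = \sum_{t=1}^m \de_{m,t} S_t$ with $\de_{m,t} := a_{m,t}-a_{m,t+1}$ (the boundary term vanishes since $a_{m,m+1}=0$ by (\ref{e a 2})) together with the Silverman--Toeplitz input $\de_{m,t} \ge 0$, $\sum_t \de_{m,t} = a_{m,1} \to 1$, and $\de_{m,t} \to 0$ for each fixed $t$; these all follow from (\ref{e a 1}) and (\ref{e a 3}), and the argument transfers term-by-term to an arbitrary Banach space.
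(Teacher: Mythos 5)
Your proof is correct and follows essentially the same path as the paper: both first establish uniform convergence of the ordinary partial sums $\sum_{t=1}^T \ee^{\ii t\th}y(t)/t$ on closed arcs disjoint from $\supp\mu$, and then invoke regularity of the $A$-summation in the Banach space of continuous functions on such an arc. The only differences are cosmetic — you prove the uniform convergence directly via the Dirichlet test and spell out the Silverman--Toeplitz verification, whereas the paper derives it from the explicit form $S_y(\ee^{\ii\th}) = -\sum_n \al_n\log(1-\ee^{\ii(\th-\la_n)})$ and its smoothness away from the singularities, and cites Hardy for regularity.
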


\begin{proof}
Note that
$
S_{y} (\ee^{\ii \th}) = - \sum_{n=1}^{N} \al_n 
\log (1-\ee^{\ii (\th -\la_n)}) \in \L^1 (\TT),
$
where the sum converges with respect to the norm of $\L^1 (\TT)$.
Since $S_{y} (\ee^{\ii \th})$ is smooth on $\TT\setminus \supp \mu$,
the partial sums $\sum_{t=1}^n \ee^{\ii t \th} y(t)/ t $ converge uniformly 
on each closed arc of $\TT$ disjoint with $\supp \mu$.
The regularity of the $A$-summation completes the proof.
\end{proof}

In the next subsection, we will use the notion of set convergence following, e.g., \cite{RW}. Namely,
for $z \in \CC$, $G \subset \CC$, $G_{1,2} \subset \CC$, put
\begin{gather*}
\dist (z,G) := \inf_{\zeta \in G} |z-\zeta| \ \text{ if } G \neq \varnothing,  \quad
\ \text{ and }  \dist (z,G)= +\infty \ \text{ if } G = \varnothing  
\\
\distPH (G_1,G_2) := \sup_{z \in \CC} | \dist (z,G_1) - \dist (z,G_2) |  \ \text{ if } G_1 \cup G_2 \neq \varnothing , \\
\text{ and }  \distPH (G_1,G_2) := 0  \ \text{ if } G_1 = G_2 = \varnothing .
\end{gather*}
If both of the sets $G_{1,2}$ are nonempty and closed,  
$\distPH$ is \emph{the Pompeiu-Hausdorff distance} and can be defined by an 
alternative formula 
\begin{equation} \label{e:distHP2}
\distPH (G_1, G_2) = \inf \{ \eta \ge 0 \ : 
\ G_1 \subset G_2 + \eta \overline{\DD} , \ G_2 \subset G_1 + \eta \overline{\DD} \} \ , 
\end{equation}
where $\overline{\DD}$ is the closed unit disc in $\CC$, see \cite[formula 4(5)]{RW}.

Having a sequence $\left( L_m \right)_{m=1}^{+\infty}$ of closed subsets of $\TT$, we say that
$\left( L_m \right)_{m=1}^{+\infty}$ \emph{converges} to a closed set $G$ 
(and write $\lim L_m = G$) 
if \quad $\lim\limits_{m\to +\infty} \distPH (L_m,G) = 0$.
Since only the subsets of the bounded set $\TT$ are considered, 
this convergence is a restriction of the Painlev\'{e}-Kuratowski convergence 
on closed subsets of $\TT$, and coincides with the convergence 
with respect to  Pompeiu-Hausdorff distance whenever $L_m$ and  $G$
are nonempty.

\subsection{Localization by discrete superlevel arcs}
\label{ss:loc}

In this subsection, let us fix  an arbitrary summation matrix 
$A$ satisfying (\ref{e a 1})-(\ref{e a 4}).
The  kernel $S_{\one,m}^A $ associated with 
$A$-summation of the anti-derivative is defined by 
\[
S_{\one,m}^A (\ee^{\ii \th}) := \sum_{t=1}^{m} \frac{a_{m,t}}{t} \ee^{\ii t \th} .
\]

The goal of this section is to produce strongly consistent estimators of the support of the measure $\mu$,
i.e., to produce random subsets $\wh L_m $ of $\TT$ that a.s. converge to $\supp \mu$
as $m \to +\infty$. These estimators will have an additional property that, with probability 1,
$\supp \mu \subset \wh L_m$ for $m$ large enough.  This property will be crucial for
the construction of estimators for the parameters of the signal in the next section.

The following discrete versions of superlevel sets serve as 
building blocks for $\wh L_m$.

\begin{defin} \label{d cluster}
We define a \emph{discrete superlevel arc} of level $h \in \RR$
and grid order $J$ for the partial $A$-sum $S_{u,m}^A$ 
(in short, $(h,J)$-arc for $S_{u,m}^A$)
as a closed arc in $\TT$ of the form
$\displaystyle \Arc \left[ \frac{2\pi j_1}{J} , \frac{2 \pi j_2}{J} \right]$ with
$j_{1,2} \in \ZZ$ such that 
\[
\left| S_{u,m}^A \left( \exp \left(\ii \frac{2 \pi  j}{J} \right)\right)  \right|
\ge h \qquad
 \text{ for all } j \in \ZZ \cap [j_1,j_2] .
\]
\end{defin}

Levels and orders of grids will depend on the length  $m$ of the signal 
and have to be connected with the matrix $A$. This is done in the following
way. 

Since $S^A_{\one,m} (1) = \sum_{t=1}^m a_{m,t}/t$ goes to $+\infty$ 
as $m \to +\infty$, there exists an auxiliary sequence $\left( H_m \right)_{m=1}^{+\infty} $ of 
real numbers
such that 
\begin{equation} \label{e Hk1}
H_m < S^A_{\one,m} (1)  \ \text{ for all $m$, \qquad  and }  \ \lim H_m = + \infty .
\end{equation}
When such a sequence  $\left( H_m\right)_{m=1}^{+\infty} $ is chosen, let us fix also
a sequence $\left( h_m \right)_{m=1}^{+\infty} $ of real numbers satisfying  
\begin{equation} \label{e hk}
\lim h_m = \lim \frac{H_m}{h_m} = +\infty ,
\end{equation}
and a sequence $\left( J_m \right)_{m = 1}^{+\infty} $ of natural numbers so that
\begin{equation} \label{e:Jm}
|S^A_{\one,m} (\ee^{\ii \th})| \ge H_m \text{ for all } \th \in [-2 \pi/J_m , 2 \pi/J_m] .
\end{equation}
The latter is always possible due to the continuity of $S_{\one,m}^A (\ee^{\ii \th})$.
Examples will be given below.

Let us note that (\ref{e:Jm}), $\lim H_m = +\infty$, and Lemma \ref{l uni y} imply 
\begin{equation} \label{e n inf d 0}
\lim J_m = +\infty .
\end{equation}

For each $m \in \NN$, we denote by
\begin{equation} \label{e Lk}
\text{ $\wh L_m := \wh L_m (\om)$ 
the union of all $(h_m, J_m)$-arcs for $S_{x,m}^A$,}
\end{equation}
which is assumed to be the empty set 
in the case when such superlevel arcs do not exist.

\begin{thm}[consistent localization] \label{t conver set}
Assume  that at least one of conditions (N1)-(N3) of Section \ref{s:intro}
is fulfilled for the random noise $\ep$. 
Then almost surely 
\[
\text{the sets $\wh L_m $ converge to $\supp \mu$ and contain $\supp \mu$ for $m$ large enough}.
\]
\end{thm}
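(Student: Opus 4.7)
The plan is to split $S_{x,m}^A = S_{y,m}^A + S_{\ep,m}^A$ and control each piece. The signal part expands as $S_{y,m}^A(\ee^{\ii \th}) = \sum_{n=1}^N \al_n S_{\one,m}^A(\ee^{\ii (\th-\la_n)})$; near $\la_n$, the $n$-th term is forced to have modulus $\ge |\al_n|\, H_m$ by condition (\ref{e:Jm}), while Lemma \ref{l uni y} makes the other terms uniformly bounded in $m$ on any closed arc around $\la_n$ that excludes the remaining $\la_{n'}$. The noise part will be shown to be $o(h_m)$ uniformly on $\TT$ almost surely.

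For the noise, under (N1) the Kahane criterion for random Fourier series with coefficients $\ep_t/t$ yields a.s.\ uniform convergence on $\TT$ of the partial sums $\sum_{t=1}^m \ee^{\ii t \th}\ep_t/t$; (N2) is covered by the sharper Cuzick--Lai symmetric criterion; (N3) is handled by the $\L^2$-bound $\sum_t \E(|\ep_t|^2)/t^2 < \infty$ combined with a classical a.s.\ uniform convergence theorem for random Fourier series. In each case, an Abel summation using the properties (\ref{e a 1})--(\ref{e a 4}) (exactly the Toeplitz regularity argument underlying Lemma \ref{l uni y}) transfers this a.s.\ uniform convergence of the ordinary partial sums to the $A$-partial sums $S_{\ep,m}^A$. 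Hence $\sup_{\th}|S_{\ep,m}^A(\ee^{\ii \th})| = O(1) = o(h_m)$ a.s., because $h_m \to +\infty$.

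For the main claim, I would work on a single a.s.\ event on which the noise bound above holds simultaneously with uniform boundedness of $S_{y,m}^A$ away from $\supp\mu$. Containment $\supp\mu \subset \wh L_m$: for each $n$, let $j_n^{(m)}$ be the integer with $\la_n \in [2\pi j_n^{(m)}/J_m,\,2\pi(j_n^{(m)}+1)/J_m)$. Both grid points $\th_{m,j}=2\pi j/J_m$ for $j \in \{j_n^{(m)}, j_n^{(m)}+1\}$ satisfy $|\th_{m,j}-\la_n|\le 2\pi/J_m$, so (\ref{e:Jm}) gives the lower bound $|\al_n| H_m$ on the $n$-th summand of $S_{y,m}^A(\ee^{\ii \th_{m,j}})$; Lemma \ref{l uni y} applied to $\mu - \al_n\de(z-z_n)$ bounds the remaining summands uniformly in $m$ on a fixed arc around $\la_n$ (which eventually contains both grid points, since $J_m \to +\infty$); combining with the noise bound and $H_m/h_m\to+\infty$ gives $|S_{x,m}^A(\ee^{\ii \th_{m,j}})|\ge h_m$ a.s.\ for $m$ large, so $\Arc[\th_{m,j_n^{(m)}},\th_{m,j_n^{(m)}+1}]$ is an $(h_m,J_m)$-arc containing $z_n$. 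Convergence $\distPH(\wh L_m,\supp\mu)\to 0$: fix $\de>0$; Lemma \ref{l uni y} gives a uniform bound on $|S_{y,m}^A|$ on $\TT\setminus(\supp\mu+(\de/2)\overline{\DD})$, so combined with the noise bound and $h_m\to+\infty$, no grid point in that set can exceed $h_m$ once $m$ is large. Since every point of any $(h_m,J_m)$-arc lies within $\pi/J_m$ of a qualifying grid point and $\pi/J_m\to 0$, the whole arc sits in $\supp\mu+\de\overline{\DD}$ for $m$ large. Intersecting the resulting events over a countable $\de_k\downarrow 0$ and applying (\ref{e:distHP2}) yields the Pompeiu--Hausdorff convergence. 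The degenerate case $N=0$ is immediate from the noise bound.

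The main obstacle is the noise step: establishing almost sure uniform convergence of $\sum_t \ee^{\ii t\th}\ep_t/t$ in the heavy-tailed regime is exactly where the delicate Kahane and Cuzick--Lai estimates enter, and this is the central novelty the paper emphasizes relative to earlier consistency results that required $\E(|\ep_t|^4)<\infty$. Once $\|S_{\ep,m}^A\|_\infty = o(h_m)$ is in hand almost surely, the remainder is a deterministic arc-geometry argument driven by (\ref{e:Jm}), $H_m/h_m\to+\infty$, and Lemma \ref{l uni y}.
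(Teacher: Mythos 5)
Your overall architecture matches the paper's: split $S_{x,m}^A = S_{y,m}^A + S_{\ep,m}^A$, bound $\sup_m\|S_{\ep,m}^A\|_\infty$ a.s.\ by transferring a.s.\ uniform convergence of $\sum_t \ep_t\ee^{\ii t\th}/t$ through the regularity of $A$-summation, then run the deterministic arc-geometry on grid points using (\ref{e:Jm}), Lemma~\ref{l uni y}, and $H_m/h_m \to +\infty$ for containment and $h_m \to +\infty$ for expulsion from arcs disjoint with $\supp\mu$. The paper packages exactly this as condition (N0) plus Proposition~\ref{p det conver} plus Lemma~\ref{l:NtoN0}; your per-frequency decomposition via $\mu_n = \mu - \al_n\de(z-z_n)$ and the intersection over $\de_k \downarrow 0$ to get Pompeiu--Hausdorff convergence are the same moves.

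However, there is a genuine gap in your treatment of (N3). You claim it ``is handled by the $\L^2$-bound $\sum_t \E(|\ep_t|^2)/t^2 < \infty$ combined with a classical a.s.\ uniform convergence theorem.'' Finiteness of $\sum_t \E(|\ep_t|^2)/t^2$ alone does \emph{not} imply a.s.\ uniform convergence of the random Fourier series; it only gives, e.g., a.s.\ convergence in $\L^2(\TT)$ or pointwise a.s.\ convergence for fixed $\th$. The correct sufficient condition is the hyperdyadic block criterion of Kahane (encoded in the paper as (N4): summability of $\tau_k = 2^{k/2}\bigl(\sum_{t=2^{2^k}}^{-1+2^{2^{k+1}}}\E(|\ep_{t+1}|^2)/(t+1)^2\bigr)^{1/2}$), which is strictly stronger than the $\L^2$-condition. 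For instance, $\E(|\ep_t|^2) \sim t/(\ln t)^2$ makes $\sum_t \E(|\ep_t|^2)/t^2 < \infty$ while $\tau_k$ does not tend to zero, so (N4) fails. The paper's Lemma~\ref{l:NtoN0} therefore does the nontrivial work of verifying (N4) from the power-law growth $\E(|\ep_t|^2) = O(t^\nu)$ with $\nu < 1$ by an explicit estimate on the hyperdyadic blocks; your argument skips this verification and appeals to a hypothesis that is too weak. (Minor: the paper attributes both (N1) and (N2) to Cuzick--Lai, and (N4) to Kahane Section~7.2, rather than (N1) to Kahane as you wrote; this is an attribution slip, not a mathematical issue.)
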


The proof is given in the next subsection. Let us provide an example of possible 
choices of $H_m$, $h_m$, and $J_m$ for the case of the Dirichlet summation.

\begin{example} \label{ex:Dir}
The matrix $A$ associated with the Dirichlet summation is defined by
\begin{equation} \label{e AD}
a_{m,t} =  1 \text{ if } t \le m, \qquad a_{m,t} = 0 \quad  \text{ if } t > m .
\end{equation}
If $|\th| \le \frac{\pi}{4m}$, one has
$
| S^A_{\one,m} (\ee^{\ii \th}) |\ge 
\left| \sum_{t=1}^{m} \frac{\cos (t \th)}{t} \right| \ge 
\frac{1}{\sqrt{2}} \sum_{t=1}^{m} 
\frac{1}{t} \ge \frac{\ln (m+1)}{\sqrt{2}} .
$
So (\ref{e Hk1}), (\ref{e hk}), and (\ref{e:Jm}) are fulfilled if 
\begin{equation} \label{e:HJh_AD}
H_m =\frac{\ln (m+1)}{\sqrt{2}} , \qquad J_m = 8m , \quad \text{ and } \quad 
h_m = c_0 + c_1 \ln^{1-\de} m  ,
\end{equation}
where $\de \in (0,1)$, $c_0 \in \RR$,
and $c_1 \in \RR_+$ are constants.
\end{example}

\subsection{Proof of Theorem \ref{t conver set}}
\label{ss conver set}

Recall that $S_\ep (\ee^{\ii \th}):=
\sum_{t=1}^{+\infty} \ee^{\ii t \th} \ep_t / t $ is the random
Fourier series associated with the noise part of $S_x$.  

\begin{prop} \label{p det conver}
Assume  that $\{ \ep_t \}_{t=1}^{+\infty}$ 
satisfies the following condition:
\item[(N0)] the set $B_1$ of $\om \in \Om$ such that 
$\sup_{m\in \NN} \|S_{\ep ,m}^A (\ee^{\ii \th})\|_\infty < \infty $ 
is an almost sure event.

Then for every $\om \in B_1$ the following statements hold:
\item[(i)] each point $z_n=\ee^{\ii \la_n}$ of $\supp \mu$ 
is contained in $ \wh L_m $ for $m$ large enough,
\item[(ii)] each closed arc of $\TT$ disjoint with 
$\supp \mu$ is also disjoint with $\wh L_m $ for $m$ large enough.
\end{prop}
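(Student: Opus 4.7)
The plan is to work on the almost-sure event $B_1$, where $M(\om) := \sup_m \|S^A_{\ep,m}(\ee^{\ii \cdot})\|_\infty$ is finite by (N0), and to exploit the decomposition $S^A_{x,m} = S^A_{y,m} + S^A_{\ep,m}$ together with the linear expansion
\[
S^A_{y,m}(\ee^{\ii \th}) = \sum_{k=1}^{N} \al_k \, S^A_{\one,m}\!\left(\ee^{\ii (\th-\la_k)}\right) .
\]
The kernel $S^A_{\one,m}(\ee^{\ii \vphi})$ is concentrated near $\vphi = 0$ by (\ref{e:Jm}) and is uniformly bounded away from $\vphi = 0$ by Lemma \ref{l uni y} (applied to the point mass at $1 \in \TT$). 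These three ingredients, combined with the noise bound $M(\om)$, will control $S^A_{x,m}$ everywhere, and the proof splits naturally into (i) and (ii).

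For (i), the plan is, for each $n$ and each $m$, to choose $j_n(m) \in \ZZ$ with $\la_n \in \bigl[\frac{2\pi j_n}{J_m},\frac{2\pi (j_n+1)}{J_m}\bigr]$ and verify that both grid endpoints $\th_\star$ of this short arc satisfy $|S^A_{x,m}(\ee^{\ii \th_\star})| \ge h_m$. Since $|\th_\star - \la_n| \le 2 \pi/J_m$, (\ref{e:Jm}) gives $|S^A_{\one,m}(\ee^{\ii(\th_\star-\la_n)})| \ge H_m$; meanwhile, for every $k \ne n$ the difference $\th_\star - \la_k$ stays, for $m$ large, in a fixed closed arc bounded away from $0$, so Lemma \ref{l uni y} yields a constant $C$ independent of $m$ with $\sum_{k \ne n} |\al_k|\, |S^A_{\one,m}(\ee^{\ii(\th_\star-\la_k)})| \le C$. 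Putting these together,
\[
|S^A_{x,m}(\ee^{\ii \th_\star})| \ge |\al_n|\, H_m - C - M(\om) ,
\]
which exceeds $h_m$ for all large $m$ since $H_m/h_m \to +\infty$ by (\ref{e hk}). Hence $\Arc\!\bigl[\frac{2\pi j_n}{J_m}, \frac{2\pi (j_n+1)}{J_m}\bigr]$ is an $(h_m, J_m)$-arc for $S^A_{x,m}$ containing $z_n$, and finiteness of $N$ makes the threshold uniform in $n$.

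For (ii), let $I$ be any closed sub-arc of $\TT$ disjoint from $\supp \mu$. Lemma \ref{l uni y} gives $\sup_m \|S^A_{y,m}\|_{L^\infty(I)} < \infty$, hence a finite $C'(\om)$ with $\|S^A_{x,m}\|_{L^\infty(I)} \le C'(\om)$ for every $m$. The plan is to take $m$ so large that $h_m > C'(\om)$ and that the grid spacing $2\pi/J_m$ is strictly less than the length of $I$; both hold eventually by (\ref{e hk}) and (\ref{e n inf d 0}). A brief case analysis on how a $(h_m,J_m)$-arc $\wt J$ can meet $I$ (either $\wt J \subset I$, or $I \subset \wt J$, or proper overlap in which an endpoint of $\wt J$ lies inside $I$) then shows that $\wt J$ must contain a grid point $p \in I$. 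At such a $p$ one has simultaneously $|S^A_{x,m}(p)| \ge h_m > C'(\om) \ge |S^A_{x,m}(p)|$, a contradiction, so no such $\wt J$ exists and $\wh L_m \cap I = \varnothing$.

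The delicate step in this argument is precisely the geometric case analysis in (ii): a superlevel arc could, in principle, overlap $I$ only through a sub-arc shorter than the grid spacing and thereby avoid every grid point of $I$. This is what the condition $J_m \to +\infty$ recorded in (\ref{e n inf d 0}) is designed to rule out, once $m$ is large enough that the grid is finer than $I$. Once this geometric issue is handled, the entire probabilistic input of the proof reduces to the single uniform bound $M(\om) < \infty$ supplied by (N0).
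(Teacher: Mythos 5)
Your proof is correct and follows essentially the same approach as the paper: decompose $S^A_{x,m}$ into the dominant kernel term $\al_n S^A_{\one,m}(\ee^{\ii(\th-\la_n)})$, the contribution of the other frequencies (uniformly bounded on a punctured neighbourhood via Lemma \ref{l uni y}), and the noise part (uniformly bounded by the event $B_1$), then compare against $H_m$ and $h_m$ using (\ref{e:Jm}), (\ref{e Hk1}), (\ref{e hk}). In part (ii) you are in fact somewhat more careful than the paper's one-line deduction, making explicit the geometric fact that once $2\pi/J_m$ is smaller than the length of the arc, any superlevel arc meeting it must contain one of its grid points (note that the endpoints of a maximal $(h_m,J_m)$-arc are themselves grid points satisfying the level condition, which is what makes your case analysis close).
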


\begin{proof}
\textbf{(i)}
Let 
\begin{equation} \label{e:u_mu_n}
\text{$v(t) = \al_n \ee^{- \ii \la_n t}$, \ $t \in \NN$, \qquad and \qquad $\mu_n := \mu - \al_n \de (z-z_n)$.}
\end{equation}
Then there exists $\ga>0$ such that 
$\Arc [\la_n -\ga, \la_n+ \ga]$ is disjoint with 
$\supp \mu_n$. 
By Lemma \ref{l uni y}, $S^A_{y-v, m} (\ee^{\ii \th})$ converge uniformly on
$\Arc [\la_n -\ga, \la_n + \ga]$ as $m \to +\infty$. 
This, $\om \in B_1$, and the definition of the a.s. event $B_1$ imply
\begin{equation} \label{e C1 sup max}
C_1 := \sup_{m \in \NN}  \ \max_{|\th -\la_n| \le \ga} |S^A_{x-v,m} (\ee^{\ii \th})| \ < \ +\infty .
\end{equation}
It follows from $\lim J_m = +\infty$ that there exists $m_1$ such that
$2 \pi / J_m \le \ga$ for $m \ge m_1$. For such $m$, 
formulas (\ref{e:Jm}) and (\ref{e Hk1}) imply for $\th$ satisfying 
$| \th - \la_n| \le 2 \pi / J_m $ that
\[
|S^A_{x,m} (\ee^{\ii \th})| \ge |S^A_{v,m} (\ee^{\ii \th})| - 
|S^A_{x-v,m} (\ee^{\ii \th})| \ge
 | \al_n S^A_{\one,m} (\ee^{\ii (\th-\la_n)})| - C_1\ge | \al_n | H_m  - C_1.
\]
By (\ref{e Hk1}) and (\ref{e hk}), there exists $m_2 \ge m_1$ such that
$|S^A_{x,m} (\ee^{\ii \th}) | \ge h_m $ for  $ m \ge m_2 $ and
$| \th - \la_n| \le 2 \pi / J_m $.

Thus, for $m \ge m_2$, the point $\ee^{\ii \la_n}$ 
is contained in one of $(h_m, J_m )$-arcs for $S^A_{x,m}$, 
and so  $\ee^{\ii \la_n}$ is contained in $\wh L_m $.

\textbf{(ii)}
Consider an arc $\Arc [\th_1,\th_2]$ disjoint with $\supp \mu$.
By Lemma  \ref{l uni y} and the definition of the event $B_1$,
\[
C_2 := \sup_{m \in \NN}  \ \max_{ \th_1 \le \th \le \th_2 } \
|S^A_{x,m} (\ee^{\ii \th})| < \infty .
\]
This implies that $\Arc [\th_1,\th_2]$ is disjoint with $\wh L_m$ as soon as $C_2 < h_m$.
Condition (\ref{e hk}) completes the proof.
\end{proof}

Consider one more noise class defined by the following assumptions:
\begin{itemize}
\item[(N4)] $\ep_t$ are independent, symmetric, and have finite variances  $\E (|\ep_t|^2) $ 
satisfying the condition $\sum_{k=1}^{+\infty} \tau_k < +\infty$, where 
$\tau_k := 2^{k/2} 
\left( 
\sum\limits_{t=2^{2^k}}^{-1+2^{2^{(k+1)}}}  \frac{\E (|\ep_{t+1}|^2)}{(t+1)^2}
\right)^{1/2}$.
\end{itemize}

\begin{lem} \label{l:NtoN0}
Each of conditions (N1)-(N4) implies (N0).
\end{lem}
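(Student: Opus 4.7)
The approach is to reduce (N0) to a.s. uniform boundedness of the partial sums of the random Fourier series $S_\ep(\ee^{\ii\th}) = \sum_{t=1}^{+\infty}\frac{\ep_t}{t}\ee^{\ii t \th}$, and then to invoke the known uniform-convergence theorems for such series --- Kahane's results \cite{K85} in the i.i.d. cases (N1), (N2), and the Cuzick--Lai criterion \cite{CL80} in the independent symmetric cases (N3), (N4). For the reduction, setting $a_{m,m+1}:=0$ (consistent with (\ref{e a 2})) and writing $P_n^\ep(\ee^{\ii\th}) := \sum_{t=1}^n\frac{\ep_t}{t}\ee^{\ii t\th}$, I plan to use Abel's summation to express
\[
S^A_{\ep,m}(\ee^{\ii\th}) = \sum_{t=1}^m a_{m,t}\cdot\frac{\ep_t}{t}\ee^{\ii t\th} = \sum_{t=1}^m (a_{m,t}-a_{m,t+1})\, P_t^\ep(\ee^{\ii\th}).
\]
The monotonicity (\ref{e a 1})--(\ref{e a 2}) forces $a_{m,t}-a_{m,t+1}\ge 0$ and $\sum_{t=1}^m(a_{m,t}-a_{m,t+1}) = a_{m,1}\le 1$, so $\|S^A_{\ep,m}\|_\infty \le \sup_{n\in\NN}\|P_n^\ep\|_\infty$ uniformly in $m$. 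Hence it is enough to show $\sup_n\|P_n^\ep\|_\infty<\infty$ a.s., which in turn follows from a.s. uniform convergence of $S_\ep$ on $\TT$.

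To establish that uniform convergence under each of (N1)--(N4), I would proceed case by case, observing first that the coefficient sequence $(1/t)_{t\in\NN}$ lies in $\ell^2$. Under (N1), the i.i.d., centered variables $\ep_t$ with $\E(|\ep_t|\log|\ep_t|)<\infty$ fall under the classical Kahane theorem on uniform convergence of random Fourier series \cite{K85}. Under (N2), symmetry of $\ep_t$ allows one to replace the $\log$-moment by the weaker $\log\log$-moment in the corresponding Kahane-type statement. Under (N4), the hypothesis $\sum_k\tau_k<\infty$ is exactly the Cuzick--Lai criterion \cite{CL80} applied to the weighted sequence $(\ep_t/t)$, and yields a.s. uniform convergence directly. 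Finally, I would reduce (N3) to (N4) by a direct geometric-type estimate: when $\E|\ep_t|^2 = O(t^\nu)$ with $\nu<1$,
\[
\sum_{t=2^{2^k}}^{-1+2^{2^{k+1}}} \frac{\E|\ep_{t+1}|^2}{(t+1)^2} = O\!\left(\int_{2^{2^k}}^{\infty} s^{\nu-2}\,\dd s\right) = O\!\bigl(2^{(\nu-1)2^k}\bigr),
\]
so $\tau_k = O\bigl(2^{k/2}\cdot 2^{(\nu-1)2^{k-1}}\bigr)$, and the doubly-exponential decay of the exponent ensures $\sum_k\tau_k<\infty$.

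The main obstacle I anticipate is invoking exactly the right form of Kahane's theorem for (N1) and (N2), because \cite{K85} contains several related statements that trade off symmetry, centering, moment integrability, and $\ell^2$/entropy conditions on the coefficients; the task is to pick the version pairing the specific coefficient sequence $(1/t)$ with the $\log$ (resp. $\log\log$ under symmetry) integrability of $\ep_t$. By contrast, the Abel reduction is essentially immediate from the monotonicity of $A$, and the (N3)$\Rightarrow$(N4) implication is a routine geometric-sum estimate once the Cuzick--Lai condition has been written out.
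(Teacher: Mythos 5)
Your proof is correct and follows essentially the same strategy as the paper: reduce (N0) to a.s.\ uniform boundedness of the partial sums $P_n^\ep$ of the random Fourier series $S_\ep$, invoke the known uniform-convergence results, and establish (N3)$\Rightarrow$(N4) by the geometric-decay estimate, which matches the paper's computation. The Abel-summation bound $\|S^A_{\ep,m}\|_\infty \le a_{m,1}\sup_n\|P_n^\ep\|_\infty$ is a small but worthwhile variation: the paper instead invokes the Toeplitz regularity of $A$-summation, which requires a.s.\ uniform \emph{convergence} of $P_n^\ep$ and then concludes uniform boundedness; your Abel argument is more elementary, needs only uniform \emph{boundedness} of $P_n^\ep$, and makes the transfer from ordinary to $A$-partial sums self-contained. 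One point to correct, however: you have swapped the attributions. The paper derives (N1)$\Rightarrow$(N0) and (N2)$\Rightarrow$(N0) from Cuzick and Lai \cite{CL80}, whose theorems are stated precisely for i.i.d.\ (resp.\ i.i.d.\ symmetric) coefficients with $\E(|\ep_t|\log|\ep_t|)<\infty$ (resp.\ $\E(|\ep_t|\log\log(|\ep_t|+\ee))<\infty$) against an $\ell^2$ weight sequence such as $(1/t)$; and it derives (N4)$\Rightarrow$(N0) from Kahane's book \cite{K85}, Section~7.2, where the Salem--Zygmund-type dyadic-block criterion $\sum_k\tau_k<\infty$ (with blocks indexed by $[2^{2^k},2^{2^{k+1}})$) appears. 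Your proposal labels the $\log$/$\log\log$ moment theorems as Kahane's and the block criterion as Cuzick--Lai's; this should be reversed to match the sources, though it does not affect the mathematical validity of the argument.
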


\begin{proof}
The implications (N1)$\Rightarrow$(N0) and (N2)$ \Rightarrow $(N0)
follow from \cite{CL80}. Indeed, \cite{CL80} implies that, for each of the classes (N1) and (N2),
usual partial sums $\sum_{t=1}^m \ep_t \ee^{\ii t \th}/ t $ a.s. 
converge uniformly on $\TT$ .
The regularity of the $A$-summation process implies that $S_{\ep,m}^A$ a.s. 
converge uniformly on $\TT$, and so are a.s. uniformly bounded.

Similarly, (N4) $\Rightarrow$ (N0) follows from the results of \cite[Section 7.2]{K85}.

To show (N3) $\Rightarrow$ (N4), it is enough to notice that, under the assumptions  
$\nu \in (0,1)$ and 
$\E (|\ep_t|^2) < C t^\nu$, we have  
\begin{multline*}
\tau_k \le C^{1/2} 2^{k/2}  
\left( 
\sum\limits_{t=2^{2^k}}^{-1+2^{2^{(k+1)}}}  (t+1)^{(\nu-2)} 
\right)^{1/2} 
 \\  \le 
C^{1/2} (1-\nu)^{1/2} 2^{k/2} \left( 
2^{(\nu - 1) 2^k} - 2^{(\nu -1) 2^{(k+1)}} \right)^{1/2}
\le C^{1/2} (1-\nu)^{1/2} 2^{k/2} 2^{(\nu - 1) 2^{k-1}} ,
\end{multline*}
and so $\sum_{k=1}^{+\infty} \tau_k < +\infty$.
\end{proof}

Now, Theorem \ref{t conver set} easily follows from 
(\ref{e:distHP2}), Proposition \ref{p det conver}, and Lemma \ref{l:NtoN0} . 

\begin{remar}
The above proof shows that condition (N3) can be seen as a transparent particular case 
of the cumbersome looking condition (N4). We do not expect that under assumption of i.i.d. and $\E (|\ep_t|^2) <\infty$ the estimators developed below in Theorem \ref{t stab N} are more computationally efficient than that of \cite{NK11,ZS01}. The main point  of Theorem \ref{t stab N} and conditions (N1)-(N3) is to certify analytically that consistent estimation is possible for a non-Gaussian noise and, in this sense, to support numerical experiments of \cite{NK11,ZS01}. 
\end{remar}

\section{Two-threshold estimators for parameters}
\label{s stab}

The estimators $\wh L_m$ of $\supp \mu$ constructed in the previous section consist of 
a finite number of \emph{maximal} superlevel $(h_m,J_m)$-arcs. 
We will say that a maximal $(h_m,J_m)$-arc is a 
\emph{localization arc}
if it contains at least one point of $\supp \mu$.

Assuming that an element $\om \in \Om$ belongs to 
the almost sure event $B_1$ of Proposition \ref{p det conver}, we see that for large enough
$m \ge M_0 = M_0 (\om)$ every localization arc contains exactly one point of $\supp \mu$. Hence,
the number 
of localization arcs is a strongly consistent estimator of the number 
$N$ of frequencies. Since localization arcs converge to one-point sets $\{ \ee^{\ii \la_n} \}$ 
as $m \to +\infty$, 
\emph{the middle point of the localization arcs are strongly consistent estimator of $z_n$.}

%
%

The problem arising in realization of this approach is possible presence 
of maximal $(h_m, J_m)$-arcs that do not contain any points of $\supp \mu$.
We will call them \emph{side arcs}. The convergence 
$\wh L_m \to \supp \mu$
implies that, with growth of $m$, side arcs have to lie in smaller and smaller neighborhoods 
of points $\ee^{\ii \la_n}$
and so in smaller and smaller neighborhoods of associated localization arcs.

The goal of this section is to provide a method that filters out side arcs, and so, detects localization 
arcs. This will lead to a construction of strongly consistent estimators of all parameters.

The filtering will require a special type of summation process and an additional level sequence 
$( h'_m )_{m=1}^{+\infty} $ of real numbers satisfying 
\begin{equation} \label{e hk pr}
h_m \le h'_m \qquad \text{ and } \qquad \lim (h'_m - h_m) = \lim \frac{H_m}{h'_m} = +\infty.
\end{equation}
 
\subsection{Almost decreasing anti-derivative kernels}
\label{ss:decrKer}

There can be two causes of appearance of side arcs: 
(i) random fluctuations of $S^A_{x,m} (\ee^{\ii \th})$ due to the noise component $S^A_{\ep,m}$,
and (ii) spectral leakage arising because of sidelobes (side peaks) of the anti-derivative kernel 
$|S^A_{\one, m} (\ee^{\ii \th})|$.

To eliminate the second effect, we will use kernels with a special property, which provide  bounds uniform in 
$m$ on fluctuations from the main lobe. Namely, additionally to assumptions 
 (\ref{e a 1})-(\ref{e a 4}), we will suppose that 
the kernel $S^A_{\one,n} (\ee^{\ii \th})$ is almost decreasing in the following sense:
for each $m \in \NN$,
\begin{gather} \label{e S=f+b}
S^A_{\one,m} (\ee^{\ii \th})  = f_m (\th) + \bb (m,\th) , \\
\text{ where }
f_m (\th) \text{ is a $2\pi$-periodic even  nonnegative function non-increasing on } [0,\pi],
\label{e fk}
 \\
\text{$\bb (m,\cdot) \in \L^\infty (\RR )$ and satisfy } | \bb |_{\infty} :=
\sup_{m \in \NN}   \| \bb (m,\th) \|_\infty < \infty .
 \label{e bk}
\end{gather}


An example of such a summation matrix is given by the following proposition.

\begin{prop} \label{p AP}
For a fixed constant $C>0$, let us consider a sequence $r_m = \ee^{-C/m}$, $m \in \NN$, and the truncated Poisson summation matrix $A$ defined by
\begin{equation} \label{e AP}
a_{m,t} = r_m^t \quad \text{ if } \  t \le m, \qquad a_{m,t} = 0 \quad \text{ if } \ t > m .
\end{equation}
Then
\begin{equation} \label{e S-ln}
|S^A_{\one,m} (\ee^{\ii \th}) + \ln (1-r_m \ee^{\ii \th}) | < \int_{C}^{+\infty}
\frac{\ee^{-x}}{x} \dd x , \quad m \in \NN,
\end{equation}
and the matrix $A$ satisfies (\ref{e S=f+b})-(\ref{e bk}).
\end{prop}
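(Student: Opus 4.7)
The plan is to recognize $S^A_{\one,m}$ as a truncation of the Taylor series for $-\ln(1-z)$ at $z = r_m \ee^{\ii \th}$ and to organize the work around this closed form. First I would write
\[
S^A_{\one,m}(\ee^{\ii \th}) = \sum_{t=1}^{m} \frac{(r_m \ee^{\ii \th})^t}{t},
\]
so that the full (convergent) series equals the principal branch $-\ln(1 - r_m \ee^{\ii \th})$, unambiguously defined because $\re(1 - r_m \ee^{\ii \th}) \ge 1 - r_m > 0$.

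For (\ref{e S-ln}), the truncation error reduces to
\[
\Bigl| S^A_{\one,m}(\ee^{\ii \th}) + \ln(1 - r_m \ee^{\ii \th}) \Bigr| \le \sum_{t=m+1}^{+\infty} \frac{r_m^t}{t}.
\]
The function $s\mapsto \ee^{-Cs/m}/s$ is strictly decreasing on $(0,+\infty)$, so the integral test gives $\sum_{t=m+1}^{+\infty} \ee^{-Ct/m}/t < \int_m^{+\infty} \ee^{-Cs/m}/s\,\dd s$, and the substitution $x = Cs/m$ rewrites the right-hand side as $\int_C^{+\infty} \ee^{-x}/x\,\dd x$, as required.

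For the decomposition (\ref{e S=f+b})--(\ref{e bk}), I would take essentially the real part of the closed form as the main term. Splitting
\[
-\ln(1 - r_m \ee^{\ii \th}) = -\tfrac12 \ln(1 - 2 r_m \cos\th + r_m^2) - \ii \arg(1 - r_m \ee^{\ii \th}),
\]
I set $f_m(\th) := -\tfrac12 \ln(1 - 2 r_m \cos\th + r_m^2) + \ln(1 + r_m)$. It is $2\pi$-periodic and even; its derivative equals $-r_m \sin\th/(1 - 2r_m \cos\th + r_m^2)$, which is nonpositive on $[0,\pi]$, and direct substitution gives $f_m(\pi) = 0$, so $f_m$ is non-increasing and nonnegative on $[0,\pi]$. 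Then $\bb(m,\th) := S^A_{\one,m}(\ee^{\ii \th}) - f_m(\th)$ rewrites as
\[
\bb(m,\th) = -\ii \arg(1 - r_m \ee^{\ii \th}) - \ln(1+r_m) + \bigl[S^A_{\one,m}(\ee^{\ii\th}) + \ln(1 - r_m \ee^{\ii\th})\bigr];
\]
each summand is bounded uniformly in $m$ and $\th$ (by $\pi/2$, $\ln 2$, and the integral from (\ref{e S-ln}) respectively), yielding (\ref{e bk}).

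The only point requiring care is cosmetic rather than analytic: the naive candidate $-\ln|1 - r_m \ee^{\ii \th}|$ fails nonnegativity near $\th = \pi$, and the shift $\ln(1 + r_m)$ is chosen precisely to cancel this deficit while being absorbed into the uniformly bounded remainder $\bb$. Verifying the background hypotheses (\ref{e a 1})--(\ref{e a 4}) for the truncated Poisson matrix is routine: positivity, monotonicity of $t \mapsto r_m^t$ for fixed $m$, the limit $\ee^{-Ct/m} \to 1$ as $m \to +\infty$, and the monotonicity of $m \mapsto \ee^{-Ct/m}$ for $m \ge t$ all follow from $r_m \in (0,1)$ and the monotonicity of $m \mapsto C/m$.
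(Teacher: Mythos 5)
Your proof is correct and, for the bound (\ref{e S-ln}), runs along exactly the same lines as the paper: the truncation error is dominated by $\sum_{t=m+1}^{+\infty} r_m^t/t$, which is compared to $\int_m^{+\infty}\ee^{-Cs/m}/s\,\dd s$ by monotonicity, and a substitution finishes it. For the decomposition (\ref{e S=f+b})--(\ref{e bk}), the paper's own proof only says the properties \emph{follow easily} from (\ref{e S-ln}); the explicit choice appears later in Lemma~\ref{l:APf} (for $C=1$) as $f_m(\th)=\max\{0,\,-\ln|1-\ee^{-1/m+\ii\th}|\}$, i.e., the ``naive candidate'' truncated at zero. You instead take $f_m(\th)=-\ln|1-r_m\ee^{\ii\th}|+\ln(1+r_m)$, shifting by a uniformly bounded constant so that nonnegativity becomes exact at $\th=\pi$. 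Both choices are equally valid and equally simple: the max-truncation keeps $f_m$ as close to $-\ln|1-r_m\ee^{\ii\th}|$ as possible while remaining nonnegative, whereas the additive shift keeps $f_m$ smooth; in either case the discrepancy from $\re S^A_{\one,m}$ is absorbed into $\bb$ with a uniform bound, and the imaginary part contributes at most $\pi/2$ as you note. Your proof is in fact more explicit than the paper's at this point.
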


\begin{proof}
To prove (\ref{e S-ln}) it is enough to notice that
\[
|S^A_{\one,m} (\ee^{\ii \th}) + \ln (1-r_m \ee^{\ii \th}) | \le \sum_{t=m+1}^{+\infty} \frac{r_m^t}{t} < \int_{m}^{+\infty} 
\frac{\ee^{-Cx/m}}{x} \dd x .
\]
Then properties  (\ref{e S=f+b})-(\ref{e bk})  easily follow from (\ref{e S-ln}).
\end{proof}

It is obvious that $A$ defined by (\ref{e AP}) satisfies (\ref{e a 1})-(\ref{e a 4}).

\subsection{Strongly consistent estimators of $N$, $\ee^{\ii \la_n}$, and $\alpha_n$}
\label{ss:estNe}

In this subsection we assume that 
the matrix $A$ satisfies (\ref{e a 1})-(\ref{e a 3}) 
and the almost decreasing property (\ref{e S=f+b})-(\ref{e bk}).
We assume also that the sequences of $H_m$, $h_m$, $h'_m$, and $J_m$
are chosen in accordance with (\ref{e Hk1}), (\ref{e hk}), (\ref{e hk pr}), and (\ref{e:Jm}).


We will say that an $(h,J)$-arc $\Ar$ for $S^A_{u,m}$ is \emph{maximal} if it is not a subset of any other
$(h,J)$-arc for $S^A_{u,m}$. This means that either $\Ar$ contains all points of the form $\exp(\ii \frac{2\pi j}{J})$, $j \in \ZZ$, or $\Ar = \Arc \left[ \frac{2\pi j_1}{J} , \frac{2 \pi j_2}{J} \right]$ 
with $j_{1,2} \in \ZZ$ such that 
\begin{multline*}
\left| S_{u,m}^A \left( \exp \left(\ii \frac{2 \pi  (j_1-1) }{J} \right)\right)  \right| < h, \qquad 
\left| S_{u,m}^A \left( \exp \left(\ii \frac{2 \pi  (j_2+1) }{J} \right)\right)  \right| < h, \\
\text { and }
\left| S_{u,m}^A \left( \exp \left(\ii \frac{2 \pi  j}{J} \right)\right)  \right|
\ge h \text{ for all } j \in \ZZ \cap [j_1,j_2] .
\end{multline*}

\begin{defin} \label{d max cl}
By $\FAr_m (u)$ we denote the \emph{family of all maximal $(h_m,J_m)$-arcs for
$S^A_{u,m}$ that contain at least one $(h'_m,J_m)$-arc} for $S^A_{u,m}$. 
By $\wh N_m (u)$ we denote the
\emph{number of such maximal arcs}
in the family $\FAr_m (u)$. If $\FAr_m (u)$ is empty,  $\wh N_m (u)  := 0$.
\end{defin}

For the case when $u$ is the random signal $x = (x(t,\om))_{t=1}^{+\infty}$,
we will use the shortenings $\FAr_m = \FAr_m (x)$ and  $\wh N_m = \wh N_m (x) $.
The following theorem states, in particular, that for large  $m$, the random family  $\FAr_m$ 
coincides almost surely with the family of localization arcs, and that 
$\wh N_m$ is a strongly consistent estimator of $N$.

\begin{thm} \label{t stab N}
Assume  that at least one of the noise conditions (N1)-(N4) of 
Sections \ref{s:intro} and \ref{ss conver set} is fulfilled, and consequently, by Lemma \ref{l:NtoN0},
the $\om$-set $B_1$ defined in condition (N0) is an almost sure event. 

Then for $\om \in B_1$ the following statements hold:

\item[(i)] There exists $M = M (\om) \in \NN$ such that for all
$ m \ge M $,
\[
\text{$\wh N_m = N$  }
\]
and, for each $n \in \NN$ so that $1 \le n \le N $, 
\[
\text{ there exists a maximal $(h_m,J_m)$-arc 
$\Ar_{m,n} \in \FAr_m$ satisfying } \ \ee^{\ii \la_n} \in \Ar_{m,n} .
\]
This means that for $ m \ge M $, the family $\FAr_m$ consists exactly of 
$N$ maximal superlevel arcs  $\Ar_{m,1}$, \dots, 
$\Ar_{m,N}$. In the case $N=0$,  the family $\FAr_m$ is empty.

\item[(ii)] Suppose $N \neq 0$ and $m \ge M$. For each $n=1,\dots,N$,
denote by $j_{m,n}$ and $j'_{m,n}$ the natural numbers such that 
$\Ar_{m,n} = \Arc \left[ 2 \pi \ii \dfrac{j_{m,n}}{J_m} , 2 \pi \ii \dfrac{j'_{m,n}}{J_m} \right]$.
Then 
\begin{equation}
\wh z_{m,n} := \exp \left(2 \pi \ii \frac{j_{m,n}+j'_{m,n}}{2 J_m} \right) 
\quad \text{converge to $z_n =\ee^{\ii \la_n}$
\quad as $m \to +\infty$}. \label{e:whzn}
\end{equation}
If, additionally to (\ref{e:Jm}), the sequence $( J_m )_{m=1}^{+\infty}$ satisfies 
\begin{equation} \label{e j comp max}
\lim_{m\to +\infty} \frac{S^A_{\one,m} \left( \ee^{2 \pi \ii / J_m} \right)}{S^A_{\one,m} (1)} = 1 ,
\end{equation}
and 
$j_{m,n}^{\max} \in [j_{m,n},j'_{m,n}] \cap \ZZ$ is a maximizer in the sense   
\begin{equation} \label{e:maximizer}
\left| S^A_{x,m} \left( \exp \left(  2 \pi \ii \frac{j_{m,n}^{\max}}{J_m} \right) \right) \right| =
\max_{j_{m,n} \le j \le j'_{m,n}} \left| 
S_{x,m}^A  \left( \exp \left(  2 \pi \ii \frac{j}{J_m} \right) \right) 
\right| ,
\end{equation}
then 
\begin{equation} \label{e al m}
\al_n = \lim_{m \to +\infty} 
\frac{S^A_{x,m} \left( \exp \left(  2 \pi \ii \dfrac{j_{m,n}^{\max}}{J_m} \right) \right)}
{\sum_{t=1}^{m} a_{m,t} / t} .
\end{equation}
\end{thm}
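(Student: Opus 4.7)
The plan is to combine Proposition \ref{p det conver} with the almost-decreasing kernel property (\ref{e S=f+b})--(\ref{e bk}) to force every $(h'_m,J_m)$-peak to lie inside the localization arc of a single frequency. Fix $\om\in B_1$. For each $n$, I will use Lemma \ref{l uni y} and the defining property of $B_1$ to produce a constant $C_{\mathrm{rem}}=C_{\mathrm{rem}}(\om)$ such that
\[
|R_{m,n}(\th)|\le C_{\mathrm{rem}},\qquad R_{m,n}(\th):=S_{x,m}^A(\ee^{\ii\th})-\al_n\,S_{\one,m}^A(\ee^{\ii(\th-\la_n)}),
\]
uniformly in $m\in\NN$ and $\th\in\Arc[\la_n-\ga,\la_n+\ga]$, where $\ga>0$ is smaller than half the minimal spacing between distinct $\la_k$. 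This decomposition is the workhorse for both parts.

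For part (i), I first show that each $\la_n$ carries an $(h'_m,J_m)$-arc. At the grid point $\th^{0}_{m,n}$ closest to $\la_n$, inequality (\ref{e:Jm}) yields $|S_{\one,m}^A(\ee^{\ii(\th^{0}_{m,n}-\la_n)})|\ge H_m$, hence $|S_{x,m}^A(\ee^{\ii\th^{0}_{m,n}})|\ge|\al_n|H_m-C_{\mathrm{rem}}\ge h'_m$ eventually, because $H_m/h'_m\to+\infty$. The harder direction is to exclude \emph{side arcs} from $\FAr_m$. Suppose, for contradiction, a grid point $\th^{*}$ of a putative side arc satisfied $|S_{x,m}^A(\ee^{\ii\th^{*}})|\ge h'_m$. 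Proposition \ref{p det conver}(ii) would place $\th^{*}$ inside $\Arc[\la_n-\ga,\la_n+\ga]$ for some $n$ and $m$ large, so the decomposition would give $|\al_n||S_{\one,m}^A(\ee^{\ii(\th^{*}-\la_n)})|\ge h'_m-C_{\mathrm{rem}}$, and (\ref{e S=f+b})--(\ref{e bk}) would then imply $f_m(|\th^{*}-\la_n|)\ge(h'_m-C_{\mathrm{rem}})/|\al_n|-|\bb|_\infty$. Monotonicity of $f_m$ on $[0,\pi]$ propagates this lower bound to every grid point $\th$ lying between $\th^{0}_{m,n}$ and $\th^{*}$, producing
\[
|S_{x,m}^A(\ee^{\ii\th})|\ \ge\ h'_m-2C_{\mathrm{rem}}-2|\al_n||\bb|_\infty\ \ge\ h_m
\]
for $m$ large, because $h'_m-h_m\to+\infty$. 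Consequently $\th^{*}$ and $\la_n$ would lie in the \emph{same} maximal $(h_m,J_m)$-arc, contradicting the assumption that $\th^{*}$ is in a side arc. Distinct $\la_k,\la_n$ cannot share an arc either, since Proposition \ref{p det conver}(ii) eventually separates them through any closed sub-arc of $\Arc[\la_k,\la_n]\setminus\supp\mu$. Together these observations deliver $\wh N_m=N$ eventually and the one-to-one pairing $\la_n\leftrightarrow\Ar_{m,n}$.

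For part (ii), midpoint convergence follows from shrinkage of $\Ar_{m,n}$: given $\vep>0$, Proposition \ref{p det conver}(ii) applied to the closed arcs $\Arc[\la_n\pm\vep,\la_n\pm\ga]$ shows that $\Ar_{m,n}\subset\Arc[\la_n-\vep,\la_n+\vep]$ eventually, so $\wh z_{m,n}\to\ee^{\ii\la_n}$. For the amplitude I would sandwich the maximizer value: with $\th^{*}:=2\pi j^{\max}_{m,n}/J_m$, the inequality $|S_{x,m}^A(\ee^{\ii\th^{*}})|\ge|S_{x,m}^A(\ee^{\ii\th^{0}_{m,n}})|$ from (\ref{e:maximizer}) combined with the two sides of the decomposition yields
\[
|\al_n|\bigl(f_m(2\pi/J_m)-|\bb|_\infty\bigr)-C_{\mathrm{rem}}\ \le\ |S_{x,m}^A(\ee^{\ii\th^{*}})|\ \le\ |\al_n|\bigl(f_m(|\th^{*}-\la_n|)+|\bb|_\infty\bigr)+C_{\mathrm{rem}}.
\]
Hypothesis (\ref{e j comp max}), together with $S_{\one,m}^A(1)=f_m(0)+\bb(m,0)\to+\infty$, forces $f_m(2\pi/J_m)/f_m(0)\to 1$; the sandwich then gives $f_m(|\th^{*}-\la_n|)/f_m(0)\to 1$ and therefore $S_{\one,m}^A(\ee^{\ii(\th^{*}-\la_n)})/S_{\one,m}^A(1)\to 1$ (the imaginary part is $O(1/f_m(0))$). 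Dividing the decomposition of $S_{x,m}^A(\ee^{\ii\th^{*}})$ by $S_{\one,m}^A(1)=\sum_{t=1}^{m}a_{m,t}/t$ and using $|R_{m,n}(\th^{*})|\le C_{\mathrm{rem}}$ delivers (\ref{e al m}).

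The main obstacle will be ruling out high side arcs at the upper threshold: one must turn a single isolated peak $\th^{*}$ into a chain of superlevel grid points linking $\th^{*}$ back to $\la_n$. This is precisely where both the almost-monotonicity of $f_m$ and the widening gap $h'_m-h_m\to+\infty$ enter essentially; without either ingredient, spurious peaks surviving the first threshold $h_m$ could escape detection as side arcs and break consistency of $\wh N_m$.
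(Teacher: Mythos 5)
Your proof is correct and follows essentially the same route as the paper's: both rest on Proposition \ref{p det conver} together with the decomposition $S^A_{x,m}(\ee^{\ii\th})=\al_n S^A_{\one,m}(\ee^{\ii(\th-\la_n)})+R_{m,n}(\th)$ with $R_{m,n}$ uniformly bounded near $\la_n$, the almost-decreasing kernel property (\ref{e S=f+b})--(\ref{e bk}), and the widening gap $h'_m-h_m\to+\infty$, and both obtain (\ref{e al m}) by sandwiching $f_m(\th_m^{\max}-\la_n)$ between $f_m(2\pi/J_m)$ and $f_m(0)$ and invoking (\ref{e j comp max}). The only cosmetic difference is in the side-arc exclusion, where you argue the contrapositive: the paper fixes a grid point $\th_0$ outside the localization arc, locates an intermediate sub-threshold point $\th_1$, and uses monotonicity of $f_m$ to force $|S^A_{x,m}(\ee^{\ii\th_0})|<h'_m$, while you start from a putative $(h'_m)$-peak $\th^*$ and propagate the $(h_m)$-level inward to $\la_n$, reaching the same conclusion.
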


The proof is given in the next subsection. The existence of 
$(h'_m)_{m=1}^{+\infty}$ and $(J_m)_{m=1}^{+\infty}$ satisfying 
(\ref{e hk pr}) and (\ref{e j comp max}) is obvious. A particular example is given by the next statement.

\begin{prop} \label{p seq AP}
Let $A$ be the truncated Poisson summation matrix defined by
\begin{equation} \label{e AP C=1}
a_{m,t} =\ee^{-t/m} \text{ if } t \le m, \qquad a_{m,t} = 0 \text{ if } t > m .
\end{equation}
Then conditions (\ref{e Hk1}), (\ref{e hk}), (\ref{e:Jm}), and (\ref{e hk pr})
are fulfilled if $H_m$, $h_m$, $h'_m$ are defined  by
\begin{gather} \label{e:APparam}
H_m = \frac{1}{2} \ln (m) - \frac{1}{2} , \quad
h_m = c_3 \ln^{1-\de} (m) + c_4, \quad  h'_m =  c_5 \ln^{1-\de} (m) + c_6, 
\end{gather}
where $\de \in (0,1)$ and $c_{3,4,5,6} \in \RR$ are arbitrary constants such that
$0<c_3 < c_5$, $c_4 < c_6$,
and $J_m$ satisfy 
$J_m \ge  2 \pi  \sqrt{m}  $.

If, additionally,
\begin{equation} \label{e:Jm_large}
J_m \ge    2 \pi m^{3/2}  ,
\end{equation}
then (\ref{e j comp max}) is also fulfilled.
\end{prop}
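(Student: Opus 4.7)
The plan is to reduce every verification to the representation $S^A_{\one,m}(\ee^{\ii\theta}) = -\ln(1 - r_m \ee^{\ii\theta}) + \bb(m,\theta)$ supplied by Proposition \ref{p AP}, where $r_m = \ee^{-1/m}$ and the remainder is uniformly bounded by $K := \int_1^{+\infty} \ee^{-x}/x\, \dd x$. Since all target sequences $H_m$, $h_m$, $h'_m$ grow like positive powers of $\ln m$, the bounded correction $\bb$ is negligible in every inequality, and the verification collapses to elementary estimates of $|1 - r_m \ee^{\ii\theta}|$ in the two regimes $|\theta| \le 1/\sqrt{m}$ and $|\theta| \le m^{-3/2}$.

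For (\ref{e Hk1}) I would set $\theta = 0$ and use $1 - \ee^{-1/m} \le 1/m$ to get $S^A_{\one,m}(1) \ge \ln m - K$, which strictly exceeds $H_m = \frac{1}{2}\ln m - \frac{1}{2}$ for every $m \in \NN$, while $\lim H_m = +\infty$ is evident. Conditions (\ref{e hk}) and (\ref{e hk pr}) follow immediately from $\ln^{1-\de} m = o(\ln m)$ together with the hypotheses $0 < c_3 < c_5$ and $c_4 < c_6$. For (\ref{e:Jm}) with $J_m \ge 2\pi\sqrt{m}$ I would expand
\[
|1 - r_m \ee^{\ii\theta}|^2 = (1 - r_m)^2 + 2 r_m (1-\cos\theta) \le \frac{1}{m^2} + \frac{1}{m}
\]
for $|\theta| \le 2\pi/J_m \le 1/\sqrt{m}$, take $-\ln$ of the square root to obtain $-\ln|1 - r_m \ee^{\ii\theta}| \ge \frac{1}{2}\ln m - \frac{1}{2m}$, and conclude $|S^A_{\one,m}(\ee^{\ii\theta})| \ge \frac{1}{2}\ln m - \frac{1}{2m} - K \ge H_m$ as soon as $m \ge 1/(1-2K)$; the finitely many small $m$ are handled by direct computation of $S^A_{\one,m}$.

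For (\ref{e j comp max}) under the stronger assumption $J_m \ge 2\pi m^{3/2}$, I would factor
\[
1 - r_m \ee^{\ii\theta} = (1-r_m)\Bigl[ 1 - \frac{r_m(\ee^{\ii\theta}-1)}{1-r_m} \Bigr] ;
\]
for $|\theta| \le m^{-3/2}$, using $|\ee^{\ii\theta}-1| \le |\theta|$ and $1-r_m \ge 1/(2m)$ (the latter from $\ee^{-x} \le 1 - x + x^2/2$), the bracket differs from $1$ by $O(m^{-1/2})$. Taking logarithms gives $-\ln(1-r_m \ee^{\ii\theta}) + \ln(1-r_m) = O(m^{-1/2})$, and combining with $|\bb(m,\theta) - \bb(m,0)| \le 2K$ yields $|S^A_{\one,m}(\ee^{2\pi \ii/J_m}) - S^A_{\one,m}(1)| = O(1)$. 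Division by $S^A_{\one,m}(1)$, which grows like $\ln m$, then delivers the claimed limit $1$.

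The main obstacle is tracking the constants in the derivation of (\ref{e:Jm}): the leading coefficient in the upper bound on $|1 - r_m \ee^{\ii\theta}|^2$ must come out to $1/m$ and not $2/m$, because a factor-of-two slack would reduce the estimate to $|S^A_{\one,m}(\ee^{\ii\theta})| \ge \frac{1}{2}\ln m - \frac{\ln 2}{2} - K$, and the numerical inequality $\frac{\ln 2}{2} + K > \frac{1}{2}$ would place this bound below $H_m$. Exploiting the fact that the contributions $(1-r_m)^2 \le m^{-2}$ and $r_m\theta^2 \le m^{-1}$ merely add rather than compound is precisely what makes the specific constant $\frac{1}{2}$ in the definition of $H_m$ admissible, and it also dictates the exponent $1/2$ in the threshold $J_m \ge 2\pi\sqrt{m}$.
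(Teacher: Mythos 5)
Your proof is correct and follows the paper's overall strategy of reducing everything to Proposition~\ref{p AP} and then estimating $|1-r_m\ee^{\ii\theta}|$ near $\theta=0$, but the detailed calculations differ from the paper's in both places that require real work. The paper's Step~1 establishes the single parametrized estimate $|S^A_{\one,m}(\ee^{\ii\theta})|\ge\frac{p+1}{2}\ln m-\frac12$ on $|\theta|\le m^{-p-1/2}$ for all $m\in\NN$ and $p\in\{0,1\}$, which it gets by proving the exact bound $|1-\ee^{-1/m+\ii\theta}|^2\le m^{-(p+1)}$ via the elementary inequality $x^{p+1}\ee^x+2-(\ee^x+\ee^{-x})\ge x^{2p+1}$; that single estimate then feeds into both \eqref{e:Jm} (with $p=0$) and \eqref{e j comp max} (with $p=1$). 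You instead use the cruder bound $|1-r_m\ee^{\ii\theta}|^2\le m^{-2}+m^{-1}$, which is easier to verify but costs you a $\tfrac{1}{2m}$ slack and hence a small-$m$ caveat; this is harmless here because $H_1<0$ and your threshold $m\ge 1/(1-2K)$ already covers $m\ge2$, but you should spell out that $K=\int_1^\infty\ee^{-x}/x\,\dd x<\ee^{-1}<1/2$ to justify $1-2K>0$. You also use the sharper constant $K\approx0.22$ from Proposition~\ref{p AP} rather than the $\ee^{-1}$ of Lemma~\ref{l:APf}, which is what makes the slack survive. For \eqref{e j comp max} your route via the factorization $1-r_m\ee^{\ii\theta}=(1-r_m)\bigl[1-\tfrac{r_m(\ee^{\ii\theta}-1)}{1-r_m}\bigr]$ and the bound $S^A_{\one,m}(\ee^{\ii 2\pi/J_m})-S^A_{\one,m}(1)=O(1)$ is actually more direct than the paper's, which controls the modulus ratio via Step~1 with $p=1$ and then argues separately that the argument of $S^A_{\one,m}(\ee^{\ii 2\pi/J_m})$ tends to zero. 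In short: the high-level plan coincides; the paper's Step~1 is sharper and uniform in $m$, your Step~3 is cleaner, and each variant is a legitimate proof once the numerical constant $K<1/2$ is recorded explicitly.
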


The proof is given in Section \ref{ss:ProofsPrseqAP}.

\begin{remar} \label{r max clust}
It is clear that the following statements involving a maximal $(h_m,J_m)$-arc $\Ar$ 
for $S^A_{u,m}$ are equivalent:
(i) $\Ar$ contains at least one $(h'_m,J_m)$-arc,
(ii) $\Ar$ contains at least one maximal $(h'_m,J_m)$-arc, (iii) $\Ar$ contains
a point $z_0$ of form $z_0 = \exp \left(2 \pi \ii  \frac{ j}{J_m }\right)$
with $j \in \ZZ$ such that $|S^A_{u,m} (z_0)| \ge h'_m$.
\end{remar}

\begin{remar} \label{r L'k}
For large $m$, the sets
$\wh L'_m $ defined as the unions of all $(h'_m, J_m)$-arcs for $S_{x,m}^A$
obviously provide a better localization for $\supp \mu$ than $\wh L_m$.
Namely, for $\om \in B_1$,
\begin{eqnarray*}
 \ee^{\ii \la_n} \in \wh L'_m \cap \Ar_{m,n} \text{ for large enough } m, 
 \quad \text{ and } \quad \{ \ee^{\ii \la_n} \}  =  \lim_{m\to +\infty} \wh L'_m \cap \Ar_{m,n} .
\end{eqnarray*}
Since in the settings of (\ref{e:maximizer}), 
$\exp \left(  2 \pi \ii \frac{j_{m,n}^{\max}}{J_m} \right) \in \Ar_{m,n} \cap \wh L'_m$, 
the procedure of finding of \\
$\max\limits_{j_{m,n} \le j \le j'_{m,n}} \left| 
S_{x,m}^A  \left( \exp \left(  2 \pi \ii \frac{j}{J_m} \right) \right) 
\right|$ can be restricted to the numbers 
$\exp \left(  2 \pi \ii \frac{j}{J_m} \right)$ lying in the more narrow set $ \Ar_{m,n} \cap \wh L'_m$.
\end{remar}

 Assume that a finite signal $\{x(t)\}_{t=1}^M$ of a  sufficiently large length $M$ is given and, for $m \le M$, 
the summation matrix $(a_{m,t})$, the levels $h_m, h'_m$, and the frequency grid orders $J_m$ are prescribed in accordance with (\ref{e AP C=1})-(\ref{e:Jm_large}).
If $m$ is large enough to ensure that at least one of the numbers $ \left| 
S_{x,m}^A  \left( \exp \left(  2 \pi \ii \frac{j}{J_m} \right) \right) \right| $, $j=1,\dots,J_m$,
is less than the level $h_m$, then the algorithmic summary for computation of the estimators of $z_n$ and $\al_n$, $n=1,\dots,N$, by first $m$ values of $x$ is as follows.
 

\begin{algorithm} \quad

\medskip

 \noindent {\bf{Input:}}  $m, J_m \in \NN$, $x(t) \in \CC$ and  $a_{m,t} \in (0,1]$ ($t=1,\dots, m$),  $h_m, h'_m  \in \RR_+$.\\[1ex]
\noindent \textbf{Step 1.} Compute  $ S_{x,m}^A  \left( \ee^{ 2 \pi \ii  j/J_m} \right) $ for $j=1$, \dots, $J_m$ by formula (\ref{e SAxm}).\\[1ex]
\noindent \textbf{Step 2.} Find all $j \in [1,J_m] \cap \NN $ such that $\left| S_{x,m}^A  \left( \ee^{ 2 \pi \ii  j/J_m} \right) \right| \ge h_m$.\\[1ex]
\noindent \textbf{Step 3.} Find all  maximal $(h_m,J_m)$-arcs using Definition \ref{d cluster}.\\[1ex]
 \noindent \textbf{Step 4.} Find the family $\FAr_m (x) = \{ \Ar_{m,n} \}_{n=1}^{\wh N_m} $ ($\wh N_m \in \ZZ^+$) of all maximal  $(h_m,J_m)$-arcs
 $\Ar_{m,n} = \Arc \left[ 2 \pi \ii \dfrac{j_{m,n}}{J_m} , 2 \pi \ii \dfrac{j'_{m,n}}{J_m} \right]$ ($j_{m,n}, j'_{m,n} \in \NN$) that, according to 
 Definition \ref{d max cl},  contain
  a point $\zeta_n$ of form $\zeta_n = \exp \left(2 \pi \ii  \frac{j}{J_m }\right)$, $j \in [j_{m,n},j'_{m,n}] \cap \NN$,
   satisfying $|S^A_{u,m} (\zeta_n)| \ge h'_m$.\\[1ex]
  \noindent {\bf{Output:}} the estimator $\wh N_m$ of the number of frequencies.\\[1ex]
 \noindent \textbf{Step 5.} If $\wh N_m =0$, then \emph{stop}.\\ Otherwise (when $\wh N_m \ge 1$) continue with the following steps:\\[1ex]
 \noindent \textbf{Step 6.} For   $n=1$, \dots, $\wh N_m$, compute the estimator
$\wh z_{m,n} := \exp \left(2 \pi \ii \frac{j_{m,n}+j'_{m,n}}{2 J_m} \right)$.\\[1ex]
\noindent \textbf{Step 7.} For   $n=1$, \dots, $\wh N_m$, compute the estimator
$\wh \alpha_{m,n} $  by  the averaging formula\\[0ex]
$\quad \quad \wh \alpha_{m,n} = \frac{1}{I \sum_{t=1}^{m} a_{m,t} / t} \sum_{i=1}^{I} S^A_{x,m} \left( \exp \left(  2 \pi \ii \dfrac{j_{m,n}^{\max,i}}{J_m} \right) \right) 
$, where $\{ j_{m,n}^{\max,i} \}_{i=1}^{I} $  is the set \linebreak $\argmax\limits_{j \in [j_{m,n}, j'_{m,n}] \cap \ZZ} \left| S_{x,m}^A  \left( \exp \left(  2 \pi \ii \frac{j}{J_m} \right) \right) 
\right|$ of maximizers in the sense  of (\ref{e:maximizer}), and $I \in \NN$ is their number.\\[1ex]
\noindent {\bf{Output:} } $\wh z_{m,n}, \wh \alpha_{m,n}$, $n = 1,\dots,\wh N$.
\end{algorithm}
\begin{remar}
Actually, by the proof of Theorem \ref{t stab N}, any rule choosing a point $\wh z_{m,n} \in \Ar_{m,n}$ provides consistent estimators $\wh z_{m,n}$. The formula $\wh z_{m,n} = \exp \left(2 \pi \ii \frac{j_{m,n}+j'_{m,n}}{2 J_m} \right)$ was chosen only because of its simplicity. A more practical choice may involve the number
$j_{m,n}^{\max} \in [j_{m,n},j'_{m,n}] \cap \NN$ that is a maximizer in the sense  of  
(\ref{e:maximizer}), or the average of such maximizers in a very improbable case of several of them.
\end{remar}

\subsection{Proof of Theorem \ref{t stab N}}
\label{ss:ProofsThPr}

When $N=0$, Theorem \ref{t stab N} follows from  Proposition \ref{p det conver} (ii).
Consider the case $N > 0$. Let us fix natural $n \le N$ and define the sequence $(v(t))_{t=1}^{+\infty}$ 
and the measure $\mu_n$ by (\ref{e:u_mu_n}).

Statement (i) of Theorem \ref{t stab N} follows from Proposition \ref{p det conver}
and the next proposition.

\begin{prop}  \label{p isol max cl}
Let $\Arc [\la_n - \ga, \la_n+\ga]$ with a certain $\ga>0$ be disjoint with
the support of the measure $\mu_n$, and let $\om \in B_1$. Then: 

\item[(a)] There exists  a natural number $M_n = M_n (\ga,\om)$ such that
for $m \ge M_n $ the following statements hold:
\subitem (i) in the family $\FAr_m$ there exists exactly one maximal
$(h_m,J_m)$-arc $\Ar_{m,n}$ that has a nonempty
intersection with $\Arc [\la_n - \ga, \la_n+\ga]$,
\subitem (ii)  $\Ar_{m,n} \subset \Arc [\la_n - \ga, \la_n+\ga]$,
\subitem (iii)  $\ee^{\ii \la_n} \in \Ar_{m,n}$.

\item[(b)]  As $m \to +\infty$, $\Ar_{m,n}$ converge to the one-point set 
$\{ \ee^{\ii \la_n} \}$ .
\end{prop}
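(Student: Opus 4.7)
The plan is to combine Proposition \ref{p det conver} (which already yields $\wh L_m \to \supp \mu$ at the $h_m$-threshold) with the almost-decreasing decomposition (\ref{e S=f+b})--(\ref{e bk}) and the two-threshold gap $h'_m - h_m \to +\infty$ from (\ref{e hk pr}). I would first set up the comparison signal $v(t) = \al_n \ee^{-\ii \la_n t}$ and the residual measure $\mu_n$ from (\ref{e:u_mu_n}); since $\supp \mu_n$ avoids $\Arc [\la_n-\ga,\la_n+\ga]$, Lemma \ref{l uni y} combined with $\om \in B_1$ gives
\[
C_1 := \sup_{m \in \NN} \ \max_{|\th-\la_n|\le \ga} |S^A_{x-v,m}(\ee^{\ii\th})| \ < \ +\infty.
\]
By (\ref{e S=f+b}), $S^A_{v,m}(\ee^{\ii \th}) = \al_n \bigl(f_m(\th - \la_n) + \bb(m, \th - \la_n)\bigr)$. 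I would also shrink $\ga$ so that $\Arc[\la_n-\ga,\la_n+\ga]$ meets $\supp \mu$ only at $\ee^{\ii \la_n}$.

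Parts (a)(iii) and (a)(ii) are short. For (iii), I mimic the proof of Proposition \ref{p det conver}(i): when $m$ is large enough that $2\pi/J_m \le \ga$, the lower bound
\[
|S^A_{x,m}(\ee^{\ii \th})| \ \ge \ |\al_n| H_m - C_1
\]
holds uniformly in $|\th-\la_n| \le 2\pi/J_m$ by (\ref{e:Jm}), and the right-hand side exceeds $h'_m$ for large $m$ since $H_m/h'_m \to +\infty$ by (\ref{e hk pr}). Applied to the grid point nearest $\ee^{\ii \la_n}$, this shows $\ee^{\ii \la_n}$ lies in a maximal $(h_m, J_m)$-arc $\Ar_{m,n}$ that already carries a one-point $(h'_m, J_m)$-arc, hence $\Ar_{m,n} \in \FAr_m$. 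For (ii), I apply Proposition \ref{p det conver}(ii) to the sub-arcs $\Arc[\la_n - \ga, \la_n - \ga/2]$ and $\Arc[\la_n + \ga/2, \la_n + \ga]$, which are disjoint from $\supp \mu$ and therefore from $\wh L_m$ for large $m$; any maximal arc of $\wh L_m$ meeting $\Arc[\la_n-\ga,\la_n+\ga]$ is then trapped inside $\Arc(\la_n-\ga/2,\la_n+\ga/2)$.

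The crux is uniqueness (a)(i), where the almost-decreasing property is essential. Suppose, toward a contradiction, a second maximal $(h_m, J_m)$-arc $\Ar' \in \FAr_m$ also meets $\Arc[\la_n-\ga,\la_n+\ga]$. By (ii), $\Ar' \subset \Arc(\la_n-\ga/2,\la_n+\ga/2)$, and by maximality $\Ar' \cap \Ar_{m,n} = \varnothing$, so $\Ar'$ avoids $\ee^{\ii \la_n}$ and lies entirely on one angular side of $\la_n$. Pick $\zeta_0 = \ee^{\ii \th_0} \in \Ar'$ with $|S^A_{x,m}(\zeta_0)| \ge h'_m$ (such a grid point exists because $\Ar' \in \FAr_m$), and let $\zeta_1 = \ee^{\ii \th_1}$ be the grid point adjacent to $\Ar'$ on the $\la_n$-side. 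Then $|\th_1 - \la_n| < |\th_0 - \la_n| \le \ga/2 < \pi$, and by maximality of $\Ar'$, $|S^A_{x,m}(\zeta_1)| < h_m$. The monotonicity (\ref{e fk}) gives $f_m(\th_1 - \la_n) \ge f_m(\th_0 - \la_n)$, so (\ref{e bk}) yields
\[
|S^A_{v,m}(\zeta_1)| \ \ge\ |\al_n| f_m(\th_0 - \la_n) - |\al_n||\bb|_\infty \ \ge\ |S^A_{v,m}(\zeta_0)| - 2|\al_n||\bb|_\infty,
\]
and consequently
\[
|S^A_{x,m}(\zeta_1)| \ \ge\ h'_m - 2C_1 - 2|\al_n||\bb|_\infty \ > \ h_m
\]
for $m$ large, by (\ref{e hk pr}). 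This contradicts $|S^A_{x,m}(\zeta_1)| < h_m$, ruling out $\Ar'$.

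Part (b) is then immediate: running the preceding argument with $\ga$ replaced by an arbitrary $\ga' \in (0, \ga]$ produces $M_n(\ga',\om)$ so that $\ee^{\ii \la_n} \in \Ar_{m,n} \subset \Arc[\la_n - \ga', \la_n + \ga']$ for all $m \ge M_n(\ga',\om)$, whence $\distPH(\Ar_{m,n}, \{\ee^{\ii \la_n}\}) \le \ga'$ via (\ref{e:distHP2}). The main obstacle is clearly the uniqueness step: only the almost-monotonicity of the main lobe of $S^A_{\one,m}$, together with the bounded-in-$m$ defect $|\bb|_\infty$ and the two-threshold gap $h'_m - h_m \to +\infty$, can rule out a spurious maximal $(h_m, J_m)$-arc of $\FAr_m$ hiding between $\ee^{\ii \la_n}$ and the true localization arc.
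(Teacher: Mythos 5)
Your proof is correct and follows essentially the same approach as the paper: parts (a.ii)--(a.iii) rest on Proposition~\ref{p det conver} and Remark~\ref{r max clust}, and the uniqueness step (a.i) is the same monotonicity argument comparing $S^A_{x,m}$ at a grid point adjacent to the would-be side arc against a grid point inside it, yielding the same gap bound $h'_m - h_m > 2C_1 + 2|\bb|_\infty|\al_n|$. The only cosmetic difference is that you phrase (a.i) as a contradiction (assume a second arc $\Ar'\in\FAr_m$ exists and push it against the bound), whereas the paper argues directly that every grid point of $\Arc[\la_n-\ga,\la_n+\ga]\setminus\Ar_{m,n}$ has $|S^A_{x,m}|<h'_m$.
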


\begin{proof}
Proposition \ref{p det conver} and Remark \ref{r max clust} applied to
$(h_m,J_m)$-arcs and to $(h'_m,J_m)$-arcs imply the existence of a number $m_3 $ 
such that for all $m \ge m_3$ there exist a maximal $(h'_m,J_m)$-arc $\Ar'_{m,n}$ and a
maximal $(h_m,J_m)$-arc $\Ar_{m,n}$ such that
\begin{equation} \label{e lam in Cl subs Cl}
\ee^{\ii \la_n} \in \Ar'_{m,n} \subset \Ar_{m,n} \subset \Arc \left[ \la_n - \ga +\frac{2 \pi}{J_m}, \la_n + \ga -\frac{2 \pi}{J_m} \right] .
\end{equation}
These inclusions imply $\Ar_{m,n} \in \FAr_m$ and the statements (a.ii)-(a.iii).

To prove (a.i), assume that $m \ge m_3$,
and that there exists 
\begin{equation} \label{e:th0}
\ee^{\ii \th_0} \in \Arc [ \la_n - \ga , \la_n + \ga ] \setminus \Ar_{m,n}
\text{ so that $\th_0$ is of the form } \frac{2 \pi j}{J_m} ,  \quad j \in \ZZ .
\end{equation}
Then
\begin{equation} \label{e th0 x-u u}
|S^A_{x,m} (\ee^{\ii \th_0}) | \le |\al_n S^A_{\one, m} (\ee^{\ii (\th_0 - \la_n)})|
+ |S^A_{x-v, m} (\ee^{\ii \th_0 })| \le |\al_n| \left( f_{m} (\th_0 -\la_n) + | \bb |_{\infty} \right) 
+ C_1 ,
\end{equation}
where $C_1$ is the finite number defined by (\ref{e C1 sup max}).

Since $\ee^{\ii \la_n} \in \Ar_{m,n}$, $\ee^{\ii \th_0} \not \in \Ar_{m,n}$, and
$ \Ar_{m,n}$ is a \emph{maximal} $(h_m,J_m)$-arc, one can see that
the subarc of $\Arc [ \la_n - \ga , \la_n + \ga ]$ connecting $\ee^{\ii \la_n}$ and 
$\ee^{\ii \th_0}$ contains a point $\ee^{\ii \th_1}$ such that
\begin{equation}
|S^A_{x,m} (\ee^{\ii \th_1}) | < h_m . \label{e th1<hm}
\end{equation}
Since $f_{m}$ is monotone on $[-\pi,0] $ and $[0,\pi]$, we see that
\begin{equation}
f_{m} (\th_0 - \la_n) \le f_{m} (\th_1 - \la_n) \label{e fnm th0th1}
\end{equation}
where, if necessary,  a multiple of $2\pi$ is added to $\th_{0,1}$ to ensure 
$|\th_i - \la_n| \le \pi$, $i=0,1$.
On the other side,
\[
 f_{m} (\th_1 - \la_n)  \le |S^A_{\one, m} (\ee^{\ii (\th_1 - \la_n)})| + | \bb |_{\infty} =
|\al_n|^{-1} |S^A_{v, m} (\ee^{\ii \th_1 })| + | \bb |_{\infty}
\]
and
\[
|S^A_{v, m} (\ee^{\ii \th_1 })| \le  |S^A_{x, m} (\ee^{\ii \th_1 })| +C_1 .
\]
Combining these inequalities with (\ref{e th1<hm}) and (\ref{e fnm th0th1}), we see that
\[
f_m (\th_0 - \la_n)  \le |\al_n|^{-1} (h_m + C_1) + | \bb |_{\infty}.
\]
The latter and (\ref{e th0 x-u u}) imply
\[
|S^A_{x,m} (\ee^{\ii \th_0}) | \le h_m + 2 C_1 + 2 | \bb |_{\infty} \; |\al_n|   .
\]
By (\ref{e hk pr}), there exists $m_4 \ge m_3$ such that
$h'_m -h_m > 2 C_1 + 2 | \bb |_{\infty} \; |\al_n|$
for $m \ge m_4$. For such $m$ we have $ |S^A_{x,m} (\ee^{\ii \th_0}) | < h'_m$.
Since the last inequality is valid for any $\ee^{\ii \th_0} $ satisfying 
(\ref{e:th0}), we see that for $m \ge m_4$ there are no $(h'_m,J_m)$-arcs
lying outside $\Ar_{m,n}$ and intersecting $\Arc [ \la_n - \ga , \la_n + \ga ]$.
Thus, (a.i) is proved.

The statement (b) follows from (a.i) and (\ref{e lam in Cl subs Cl}). This completes the proof.
\end{proof}

Let us prove statement (ii) of Theorem \ref{t stab N}.
The convergence of (\ref{e:whzn}) is obvious from Proposition \ref{p isol max cl}.
Now, we assume (\ref{e j comp max}) and prove (\ref{e al m}).

Let us denote $\th_{m,j} :=2 \pi \frac{j}{J_m} $ for $j \in [j_{m,n},j'_{m,n}] \cap \ZZ$ and 
$\th_m^{\max} := 2 \pi \dfrac{j_{m,n}^{\max}}{J_m}$.
Note that
\begin{equation} \label{e sum a/k = S}
\sum_{t=1}^m a_{m,t} /t = S^A_{\one,m} (1) = \| S^A_{\one,m} (\ee^{\ii \th}) \|_\infty =
\max_{\th \in \RR} \left| \sum_{t=1}^{m} \cos (t \th) a_{m,t} /t \right| .
\end{equation}
and that
\begin{eqnarray*}
\lim_{m \to +\infty} \frac{S^A_{x,m} (\ee^{\ii \th_m^{\max}})}{\sum_{t=1}^{m} a_{m,t} /t} & = & 
\lim_{m \to +\infty}  \frac{
\al_n f_{m} (\th_m^{\max}- \la_n) +
\al_n  \bb_{m} (\th_m^{\max}- \la_n) +S^A_{x-v,m} (\ee^{\ii \th_m^{\max}})}{S^A_{\one,m} (1)}
\\ & = &
\al_n \lim_{m \to +\infty} \frac{f_{m} (\th_m^{\max}- \la_n) }{S^A_{\one,m} (1)}
\end{eqnarray*}
if the latter limit exists.
Indeed, the sequences $ \bb_{m} (\th_m^{\max}- \la_n)$  and $S^A_{x-v,m} (\ee^{\ii \th_m^{\max}})$
are bounded due to (\ref{e bk}) and $\om \in B_1$, respectively.

Since $f_{m} (\th_m^{\max}- \la_n) \le S^A_{\one,m} (1) +| \bb |_{\infty}$, we see that
$\limsup\limits_{m\to\infty} \dfrac{f_{m} (\th_m^{\max}- \la_n) }{S^A_{\one,m} (1)} \le 1$.
Thus, to prove  (\ref{e al m}) it is enough to show that
\begin{equation} \label{e liminf>1}
\liminf_{m \to +\infty} \frac{f_{m} (\th_m^{\max}- \la_n) }{S^A_{\one,m} (1)} \ge 1.
\end{equation}
Since $C_1 := \sup\limits_{m\in \NN} \max\limits_{|\th-\la_n|<\ga} |S_{x-v,m} (\ee^{\ii \th})| < \infty$,
we see that for $\th \in [\la_n -\ga,\la_n+\ga]$,
\begin{equation} \label{e S - am f}
|S^A_{x,m} (\ee^{\ii \th}) - \al_n f_{m} (\th-\la_n)| \le C_1 +  | \bb |_{\infty} |\al_n|.
\end{equation}
Hence, for $\wt \th_m \in \{ \th_{m,j} \}_{j=j_{m,n}}^{j'_{m,n}}$ satisfying 
$| \ee^{\ii \wt \th_m} - \ee^{\ii \la_n} | = 
\min\limits_{j_{m,n} \le j \le j'_{m,n}} |\ee^{\ii \wt \th_{m,n}} - \ee^{\ii \la_n}|$,
one has
\[
|\al_n| \; f_m (\th_m^{\max} - \la_n ) \ge
|S^A_{x,m} (\ee^{\ii \th_m^{\max}})| -C_1 -  | \bb |_{\infty} |\al_n|\ge
|S^A_{x,m} (\ee^{\ii \wt \th_m})| -C_1 -  | \bb |_{\infty} |\al_n|,
\]
where the definition of $\th_m^{\max}$ was used to obtain the last inequality.
Applying (\ref{e S - am f}) once again, we see that
\begin{equation*} 
|\al_n| \; f_m (\th_m^{\max} - \la_n) \ge |\al_n | \; f_{m} ( \wt \th_m - \la_n ) -2 C_1 - 2 |\bb |_\infty |\al_n| .
\end{equation*}
Therefore,
\begin{equation} \label{e liminf>liminf}
\liminf_{m \to +\infty} \frac{f_{m} (\th_m^{\max}- \la_n) }{S^A_{\one,m} (1)} \ge
\liminf_{m \to +\infty} \frac{f_{m} (\wt \th_m- \la_n) }{S^A_{\one,m} (1)} .
\end{equation}
From the monotonicity of $f_{m}$ on $[-\pi,0]$ and $[0,\pi]$, one can see that
\[
f_m (2\pi/ J_m ) \le f_m (\wt \th_m - \la_n) \le f_m (0) \le
S^A_{\one,m} (1) + | \bb |_{\infty} .
\]
This together with (\ref{e j comp max}) and (\ref{e liminf>liminf}) implies  (\ref{e liminf>1}) 
and completes the proofs of statement (\ref{e al m}) and of the theorem.

\subsection{Proof of Proposition  \ref{p seq AP}}
\label{ss:ProofsPrseqAP}

The following lemma can be obtained from (\ref{e S-ln}) and elementary estimates.

\begin{lem} \label{l:APf}
For the matrix (\ref{e AP C=1}), the functions $f_m$ of (\ref{e S=f+b})-(\ref{e bk})
can be taken as
$f_m (\th) = \max\{ 0 ,  -\ln |1-\ee^{-1/m+\ii \th}|\} $. In this case, $| \bb |_\infty < \ln 2 + \pi/3 + \ee^{-1} < 11/5$.
Moreover,
\begin{equation} \label{e:ln-e}
|S^A_{\one,m} (\ee^{\ii \th})| \ge - \ln |1-\ee^{-1/m +\ii \th}|- \ee^{-1}
\quad  \text{ for } |\th|\le \pi/3 .
\end{equation}
\end{lem}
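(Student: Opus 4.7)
My plan is to derive every part of the lemma from Proposition \ref{p AP} with $C=1$, by splitting the principal branch of the complex logarithm into its real and imaginary parts. Since $\int_{1}^{+\infty}\ee^{-x}/x\,\dd x < \int_{1}^{+\infty}\ee^{-x}\,\dd x=\ee^{-1}$, Proposition \ref{p AP} gives the representation
\[
S^A_{\one,m}(\ee^{\ii\th}) = -\ln\bigl(1-\ee^{-1/m+\ii\th}\bigr) + R(m,\th),\qquad |R(m,\th)|<\ee^{-1},
\]
and the principal-branch identity $-\ln(1-\ee^{-1/m+\ii\th})=-\ln|1-\ee^{-1/m+\ii\th}|-\ii\arg(1-\ee^{-1/m+\ii\th})$ makes the candidate $f_m$ natural. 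I would then set $\bb(m,\th):=S^A_{\one,m}(\ee^{\ii\th})-f_m(\th)$ and reduce the lemma to structural properties of $f_m$ and a uniform bound on $|\bb|$.

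The structural properties of $f_m$ follow at once from the identity $|1-\ee^{-1/m+\ii\th}|^2=1+\ee^{-2/m}-2\ee^{-1/m}\cos\th$, which is $2\pi$-periodic, even in $\th$, and strictly increasing on $[0,\pi]$; its negative logarithm inherits these properties, and so does its positive part $f_m$, which is also non-negative by construction.

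For $|\bb|$ I would split cases according to whether $|1-\ee^{-1/m+\ii\th}|\le 1$ or $>1$. In the first case $f_m(\th)=-\ln|1-\ee^{-1/m+\ii\th}|$ and $\bb(m,\th)=-\ii\arg(1-\ee^{-1/m+\ii\th})+R(m,\th)$, so $|\bb|\le\pi/2+\ee^{-1}<\ln 2+\pi/3+\ee^{-1}$ (using $\pi/6<\ln 2$). In the second case $f_m(\th)=0$, so
\[
|\bb(m,\th)|\le \bigl|\ln|1-\ee^{-1/m+\ii\th}|\bigr| + |\arg(1-\ee^{-1/m+\ii\th})| + |R(m,\th)|,
\]
with the first term at most $\ln 2$ since $|1-\ee^{-1/m+\ii\th}|\le 1+\ee^{-1/m}\le 2$. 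The main obstacle is the sharp bound $|\arg(1-\ee^{-1/m+\ii\th})|<\pi/3$ in this regime. Setting $r:=\ee^{-1/m}<1$ and parametrizing the circle $|w-1|=r$ as $w=1+r\ee^{\ii\psi}$, a direct differentiation shows that the only critical points of $\tan(\arg w)$ occur at $\cos\psi=-r$, where $|w|^2=1-r^2<1$; hence on the sub-arc where $|w|\ge 1$ (equivalently $\cos\psi\ge -r/2$) the function $|\arg w|$ attains its extrema at the endpoints $|w|=1$. Those endpoints are $w=\ee^{\pm\ii\phi}$ with $\phi=2\arcsin(r/2)<2\arcsin(1/2)=\pi/3$, delivering the desired bound. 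Summing the three pieces yields $|\bb|_\infty<\ln 2+\pi/3+\ee^{-1}<11/5$.

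For the lower bound (\ref{e:ln-e}), $|\th|\le\pi/3$ gives $\cos\th\ge 1/2\ge \ee^{-1/m}/2$, so $|1-\ee^{-1/m+\ii\th}|^2\le 1-\ee^{-1/m}+\ee^{-2/m}=1-\ee^{-1/m}(1-\ee^{-1/m})\le 1$, placing us in the first case above. The triangle inequality in the representation from Proposition \ref{p AP} then gives
\[
|S^A_{\one,m}(\ee^{\ii\th})| \ge \bigl|{-\ln(1-\ee^{-1/m+\ii\th})}\bigr| - |R(m,\th)| \ge -\ln|1-\ee^{-1/m+\ii\th}|-\ee^{-1},
\]
using that the modulus of a complex number is at least the absolute value of its real part.
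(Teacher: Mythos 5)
Your proof is correct and fills in the details that the paper leaves to the reader: the paper states only that Lemma~\ref{l:APf} "can be obtained from (\ref{e S-ln}) and elementary estimates," and your derivation from Proposition~\ref{p AP} with $C=1$, splitting $-\ln(1-\ee^{-1/m+\ii\th})$ into real and imaginary parts and bounding $|\arg(1-\ee^{-1/m+\ii\th})|$ by $\pi/2$ in the region $|1-\ee^{-1/m+\ii\th}|\le 1$ and by $\pi/3$ in the complementary region, is exactly the kind of elementary estimate the authors have in mind.
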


Now, the proof of Proposition  \ref{p seq AP} consists of three steps.

\emph{Step 1.} Let us show that
\begin{equation} \label{e Sth>=}
|S^A_{\one,m} (\ee^{\ii \th})| \ \ge \ \frac{p+1}{2} \ \ln m \ - \  \frac12 \ \quad \text{ for } \quad |\th| \le \frac{1}{m^{p+1/2}}, \
p =0,1, \ m \in \NN .
\end{equation}
From $|\th| \le 1 < \pi/3$ and (\ref{e:ln-e}), one can see that
it is enough to prove
$|1-\ee^{-1/m+\ii \th}|^2 \le 1/m^{p+1}$ or, equivalently,
$4 \sin^2 (\th/2) \le \ee^{1/m}/ m^{p+1} +2- (\ee^{1/m}+\ee^{-1/m})$. The latter follows for
$ |\th| \le 1/m^{p+1/2}$ from $|\sin (\th/2)| \le |\th|/2$ and the fact that
\[
x^{p+1} \ee^x +2 - (\ee^{x}+\ee^{-x}) \ge x^{2p+1} \text{ for  } x \ge 0  \text{ and } p=0,1.
\]

\emph{Step 2.} Let $H_m = \frac12 \ln m - \frac 12$.
Then (\ref{e Sth>=}) with $p=0$ implies (\ref{e:Jm}) for $J_m \ge 2 \pi \sqrt{m} $, and so also (\ref{e Hk1}).
Taking $h_m$ and $h'_m$ defined by (\ref{e:APparam}), one ensures (\ref{e hk pr}).

\emph{Step 3.} Let us put $\th_m = 2\pi/\wt J_m$ with $\wt J_m \in \NN$ such that
\begin{equation} \label{e:SJ>=lnk}
|S^A_{\one,m} (\ee^{\ii \th_m})| \ \ge \ \ln m \ - \  \frac12 .
\end{equation}
Applying (\ref{e Sth>=}) with $p=1$, we see that (\ref{e:SJ>=lnk}) holds whenever
$\wt J_m  \ge m^{3/2}$. 
This means that for $J_m \ge 2 \pi m^{3/2}$,
we obtain with the use of (\ref{e S-ln}) that
\begin{equation} \label{e:|Sthk|/S1=1}
1 \ge \frac{|S^A_{\one,m} (\ee^{\ii 2\pi/ J_m})|}{S^A_{\one,m} (1)} \ge
\frac{\ln m -1/2}{- \ln (1-\ee^{-1/m}) +1/\ee}
\text{ and, in turn, } \lim_{m\to\infty}\frac{|S^A_{\one,m} (\ee^{\ii 2\pi/J_m})|}{S^A_{\one,m} (1)} = 1 .
\end{equation}
It follows from $J_m \to +\infty$ and (\ref{e S-ln}) that
$\lim_{m\to\infty}\frac{S^A_{\one,m} (\ee^{\ii 2\pi/J_m})}{|S^A_{\one,m} (\ee^{\ii 2\pi/J_m}) |} = 1$.
This and (\ref{e:|Sthk|/S1=1}) complete the proof of (\ref{e j comp max}) and of the proposition.

\section{Discussion and additional remarks}
\label{s:Discussion}

\subsection{Relaxation of signal and noise assumptions}

While the case of decaying useful part $(y(t))_{t=1}^\infty$ of the signal was addressed in Statistics in connection with  significance of related statistical hypotheses \cite{B87}, we are not aware of  studies of consistency of estimators for such models. Models with decaying $(y(t))_{t=1}^\infty$ appears, e.g.,  in the analysis of chirp-type signals \cite{MT03,Aetal16} (after inversion of the time direction) or 
in the passing from idealized conservative models 
of various tomography techniques to models with attenuated waves 
\cite{HK15}. 

Signal processing intuition suggests that if the decaying deterministic signal (i.e., $y(t) \to 0$ as $t\to \infty$) is corrupted by a random noise $\ep_t$ of `constant strength`, then `the signal-to-noise ratio' goes to $0$ as $t \to \infty$ and it is difficult to expect that standard estimators for such a signal remain asymptotically consistent.

On the other side, an estimator as a function of $\{x(t)\}_{t=1}^m$ may cancel to some extend the influence of the random noise sequence $\{ \ep_t \}_{t=1}^m$. Theorem \ref{t stab N}
shows that this cancellation can be strong enough to recover parameters of $y(\cdot)$ if the decay of $y(\cdot)$ is not very fast.
Indeed, consider the model $x(t) = \sum_{n=1}^N \al_n \frac{\ee^{-\ii \la_n t}}{t^\xi} +\ep _t$, where $\la_n$ and $\al_n$ are as before, $\ep_t $ are i.i.d. symmetric  with finite variances, and  $\xi \in \RR_+ $ is the rate of power decay of the deterministic terms $y(t) = \sum_{n=1}^N \al_n \frac{\ee^{-\ii \la_n t}}{t^\xi}$. Then the rescaled signal 
$\wt x (t) = t^\xi x(t)$ is of form (\ref{e x}).  If $\xi < 1/2$, the rescaled noise sequence $\wt \ep_t := t^{\xi} \ep_t$ satisfies (N3), and so, Theorem \ref{t stab N} is applicable.

This observation and the sequence of assumptions (N1)-(N3) naturally lead to the following question.
\begin{prob}
To what extend is it possible to relax the assumptions on the random variables $\ep_t$ and the model for deterministic part $y(\cdot)$ of the signal
such that the recovery of the parameters of $y(\cdot)$ by a certain asymptotically consistent procedure remains possible.
\end{prob} 

Another way to relax assumptions on the model for $y(\cdot)$ is to allow the number $N$ of frequencies to be (countably) infinite.
The assumption that $N$ is finite may be non-reasonable for some of applications.
For example, many of models of mechanical, acoustical,
and electro-magnetic resonators lead to infinite number of eigen-frequencies \cite{HK15,KHC13}. Generic excitation
generates  a signal containing all of them. While only some finite number of frequencies
have large enough amplitudes to be interesting, the infinite sum of the rest still contribute to
the signal. This contribution cannot be treated as independent random noise terms $\ep_t$.
That is why it is reasonable to include signals with possibly infinite $N$ into consideration,
but to recover  only frequencies with comparatively large amplitudes $|\al_n|$. 

It is not difficult to see that the consistent localization statement 
($\lim\limits_{m\to \infty} \wh L_m = \supp \mu$ a.s.)
of Theorem \ref{t conver set} remains valid in the case when $N = \infty$ under the additional assumptions that 
$ \sum_{n=1}^\infty |\al_n| < \infty $ and every frequency $z_n$ is an isolated point of the 
set $\{ z_n \}_{n=1}^\infty$ of all frequencies.

\subsection{Probability estimates for accurate localization}
\label{s:prob_ est_acloc}

The study of the precision of the above estimators
 requires knowledge of probability bounds on distribution tails of $\L^\infty (\TT)$-norms of random trigonometric polynomials $S^A_{\ep,m} (\ee^{\ii \th})$ constructed by the noise sequence $(\ep_t)_{t=1}^m$.
To illustrate this, consider probability estimates on accuracy of the procedure of localization of $\supp \mu = \{ z_n \}_{n=1}^N$ by the union $\wh L_m$ of superlevel arcs (see  Section \ref{ss:loc}).
 
In the sequel, we assume the truncated power summation matrix is defined by (\ref{e AP C=1}), 
and that $H_m$, $h_m$, $J_m$, and $h'_m$ are chosen in accordance with (\ref{e Hk1})-(\ref{e:Jm}) and (\ref{e hk pr}). 
For $\th_{1,2} \in \RR$, let us denote by $\dist_\TT (\ee^{\ii \th_1}, \ee^{\ii \th_2}) $ (by $\dist_\TT (\ee^{\ii \th_1},G) $) the `circular' distances between
$\ee^{\ii \th_1}$ and $\ee^{\ii \th_2}$ (resp., $\ee^{\ii \th_1}$ and the set $G \subset \TT $).
More precisely, $
\dist_\TT (\ee^{\ii \th_1}, \ee^{\ii \th_2}) := \min_{k \in \NN} |\th_2- \th_1 - 2 \pi k| $,
$ \dist_\TT (\ee^{\ii \th_1}, G) := \inf_{z \in G} \dist_\TT (\ee^{\ii \th_1},z)  $.
Recall that $\|\mu\| = \sum_{n=1}^\infty |\al_n| = \int_\TT |\dd \mu| $.

\begin{prop} \label{p:P_loc_gen}
(i) Assume that $\ee^{\ii \la_n}$ is the only point of $\supp \mu$ in $\Arc (\la_n -\De,\la_n+\De)$
and that $\De > 2 \pi/ J_m$. Then
\begin{equation} \label{e P lam notin}
\P (\ee^{\ii \la_n} \not \in \wh L_m) \le  \P \Bigl(
\| S^A_{\ep,m}\|_\infty \ge |\al_n| H_m - h_m - (\|\mu \| - |\al_n|) \, \left(f_{m} (\De - 2 \pi/ J_m)
+| \bb |_{\infty} \right) \,
\Bigr) .
\end{equation}

(ii) Assume that $z\in \TT$ is such that
$\dist_\TT (z,\supp \mu ) \ge \De \ge 2\pi/ J_m$.  Then
\begin{equation} \label{e P no mu}
\P (z \in L_m ) \le
\P \Bigl(
\| S^A_{\ep,m}\|_\infty \ge h_m - \|\mu \| \, \left( f_{m} (\De -\pi/J_m) +| \bb |_{\infty}
\right) \,
\Bigr)  .
\end{equation}

Statements (i) and (ii) also hold if one changes $L_m$ and $h_m$ to $L'_m$ and $h'_m$, respectively.
\end{prop}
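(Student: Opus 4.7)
For both (i) and (ii) the plan is to replace the geometric event on the left-hand side by an inequality on $|S^A_{x,m}(\ee^{\ii\th_\ast})|$ relative to $h_m$ at a suitably chosen grid point $\ee^{\ii\th_\ast}$, and then to separate the signal and noise via $S^A_{x,m}=S^A_{y,m}+S^A_{\ep,m}$ together with the linearity identity
\begin{equation*}
S^A_{y,m}(\ee^{\ii\th})\ =\ \sum_{n'=1}^{N}\al_{n'}\,S^A_{\one,m}(\ee^{\ii(\th-\la_{n'})}).
\end{equation*}
On the chosen grid point, the $n$-th summand is bounded from below through (\ref{e:Jm}), while each of the remaining $N-1$ summands is bounded from above by $|\al_{n'}|(f_m(\cdot)+|\bb|_\infty)$ via the almost-decreasing kernel estimate (\ref{e S=f+b})--(\ref{e bk}) evaluated at the appropriate separation distance.

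\textbf{Proof of (i).} I will show that $\{\ee^{\ii\la_n}\notin\wh L_m\}$ is contained in the event that some grid point $\ee^{\ii\th_\ast}$ with $|\th_\ast-\la_n|\le 2\pi/J_m$ satisfies $|S^A_{x,m}(\ee^{\ii\th_\ast})|<h_m$. Indeed, otherwise the two nearest grid points to $\ee^{\ii\la_n}$ (possibly coinciding with $\ee^{\ii\la_n}$ itself) both satisfy the reverse inequality, and they form the endpoints of an $(h_m,J_m)$-arc containing $\ee^{\ii\la_n}$. For such $\th_\ast$, bound (\ref{e:Jm}) gives $|S^A_{\one,m}(\ee^{\ii(\th_\ast-\la_n)})|\ge H_m$, while the isolation hypothesis together with $|\th_\ast-\la_n|\le 2\pi/J_m$ forces $|\th_\ast-\la_{n'}|\ge\De-2\pi/J_m\ge 0$ for every $n'\ne n$, so the monotonicity of $f_m$ on $[0,\pi]$ yields $|S^A_{\one,m}(\ee^{\ii(\th_\ast-\la_{n'})})|\le f_m(\De-2\pi/J_m)+|\bb|_\infty$. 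Applying the triangle inequality in the linearity identity and using $|S^A_{x,m}(\ee^{\ii\th_\ast})|\ge|S^A_{y,m}(\ee^{\ii\th_\ast})|-\|S^A_{\ep,m}\|_\infty$, the hypothesis $|S^A_{x,m}(\ee^{\ii\th_\ast})|<h_m$ rearranges into precisely the event on the right-hand side of (\ref{e P lam notin}).

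\textbf{Proof of (ii) and the $h'_m$ variants.} For (ii), if $z=\ee^{\ii\th_0}\in L_m$ lies in some $(h_m,J_m)$-arc with grid-point endpoints, then $z$ is between two consecutive grid points of that arc; the closer of these two is at distance at most $\pi/J_m$ from $\th_0$ and, being a grid point inside the arc, satisfies $|S^A_{x,m}(\ee^{\ii\th_\ast})|\ge h_m$. The assumption $\dist_\TT(z,\supp\mu)\ge\De$ then gives $|\th_\ast-\la_{n'}|\ge\De-\pi/J_m$ for every $n'$, whence $|S^A_{y,m}(\ee^{\ii\th_\ast})|\le\|\mu\|\bigl(f_m(\De-\pi/J_m)+|\bb|_\infty\bigr)$ by the same almost-decreasing estimate; the reverse triangle inequality $\|S^A_{\ep,m}\|_\infty\ge|S^A_{x,m}(\ee^{\ii\th_\ast})|-|S^A_{y,m}(\ee^{\ii\th_\ast})|$ then delivers (\ref{e P no mu}). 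The variants with $\wh L'_m$ and $h'_m$ follow verbatim since $\wh L'_m$ is defined by the same superlevel-arc construction with $h'_m$ in place of $h_m$. The argument presents no substantial obstacle; the only care required is bookkeeping around the grid resolution, distinguishing the worst-case distance $2\pi/J_m$ from a frequency to its straddling grid point in (i) from the $\pi/J_m$ distance from an arbitrary point of an arc to its nearest in-arc grid point in (ii), and verifying that the hypothesis $\De\ge 2\pi/J_m$ keeps $f_m$ evaluated on a non-negative argument where its monotonicity (\ref{e fk}) applies.
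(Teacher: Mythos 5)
Your proof is correct and follows essentially the same route as the paper's: decompose $S^A_{x,m}=S^A_{y,m}+S^A_{\ep,m}$, locate a grid point $\th_\ast$ forced by the arc construction to be within $2\pi/J_m$ of $\la_n$ (part (i)) or within $\pi/J_m$ of $z$ (part (ii)), use the linearity of $S^A_{y,m}$ in $\mu$ together with \eqref{e:Jm} for the dominant term and the almost-decreasing estimate \eqref{e S=f+b}--\eqref{e bk} for the remaining masses, and rearrange the (reverse) triangle inequality. The bookkeeping of grid resolutions ($2\pi/J_m$ vs.\ $\pi/J_m$) and the observation that the $h'_m$/$\wh L'_m$ variants are verbatim coincide with the paper's argument.
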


\begin{proof}
\textbf{(i)}  Put $v(t) = \al_n \ee^{-\ii  \la_n t}$. Since $\De > 2\pi/J_m$, there exist
$\th_i = 2\pi l_i / J_m$, $i =1,2$, such that $l_i \in \ZZ$,  $l_2 = l_1 +1$,
and $\ee^{\ii \la_n} \in \Arc [\th_1, \th_2]$.

Suppose $\ee^{\ii \la_n} \not \in \wh L_m$. Then at least one of the two inequalities $|S^A_{x,m}(\ee^{\ii \th_i})| < h_m$, $i=1,2$, holds. Let us fix such $\th_i$ and note that the corresponding
inequality implies
\begin{multline*} 
|S^A_{\ep,m}(\ee^{\ii \th_i})| \ge |S^A_{v,m}(\ee^{\ii \th_i})| - h_m-|S^A_{y-v,m}(\ee^{\ii \th_i})|
\ge \\ \ge |\al_n| H_m - h_m - (\|\mu \| - \al_n) (f_{m} (\De - 2\pi/J_m) + | \bb |_{\infty}) .
\end{multline*}
To show the last estimate, it is enough to notice that
$\{ \ee^{\ii \la_j }\}_{j \neq n} \cap \Arc [\th_i - \De + 2\pi/J_m, \th_i + \De - 2\pi/J_m] = \varnothing$
and that (\ref{e S=f+b})-(\ref{e bk}) imply
\[
|S^A_{y-v,m}(\ee^{\ii \th_i})| \le \sum_{j \neq n} |\al_j| (f_{m} (\th_i - \la_j) + | \bb |_{\infty}) \le
\sum_{j \neq n} (f_{m} (\De - 2\pi/J_m) +| \bb |_{\infty}) \sum_{j \neq n}  |\al_j|  .
\]

\textbf{(ii)}
Assume that $z \in \Arc [\th_1, \th_2] $, where $\th_{1,2} =2\pi l_{1,2} /J_m$ with
$l_{1,2} \in \ZZ$ and $l_2 = l_1 + 1$.
Then $z \in \wh L_m$ if and only if $|S^A_{x,m} (\ee^{\ii \th_i}) | \ge h_m$ for $i=1,2$.
Since $\dist_\TT (z,\ee^{\ii \th_i}) \le \pi /J_m$ for at least one of $i={1,2}$,
we see that, for this $i$,
$|S^A_{\ep,m}(\ee^{\ii \th_i})| \ge  h_m - |S^A_{y,m}(\ee^{\ii \th_i})|
\ge h_m - \| \mu \| \, (f_{m} (\De- \pi/J_m) + | \bb |_{\infty}).$
\end{proof}

The estimates on the distribution tails of random polynomials rely on concentration inequalities and, up to our knowledge, are available only under the subgaussian or subexponential assumptions on $\ep_t$. 

Let us consider the subgaussian case. Recall that a real random variable $\xi$ is called \emph{subgaussian with the scaling factor $b \ge 0$} if
$\E (e^{t\xi}) \le \ee^{b^2 t^2/2}$ for all $t \in \RR$.

\begin{lem} \label{l P |q|>C}
Consider a complex trigonometric polynomial
$
q(\th) = \sum_{k=0}^n a_k \xi_{k} \ee^{\ii k \th} ,
$
where $a_k \in \CC$ and where $\xi_k$ are i.i.d. complex random variables such that $\re \xi_k$ and $\im \xi_k$ are independent and subgaussian with the scaling factors $b_1$ and $b_2$, respectively.
Assume that $b_{1,2}$ are chosen such that $b_1\ge b_2 \ge 0$ and $b_1>0$.
Put \linebreak[4]
$r:= b_1^2 \sum_{k=0}^n |a_k|^2 $. 
Then,
$\P (\| q \|_\infty \ge C) \le  (n+1) 2 \pi \ee^{3/2} \left( \frac{C^2}{2r} -1 \right) \,   \exp \left( - \frac{C^2}{4 r} \right)$ 
whenever 
$C>2\sqrt{2r}$.
\end{lem}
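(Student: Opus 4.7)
The plan is to combine a pointwise subgaussian tail estimate for $|q(\theta)|$ with a Bernstein-inequality-driven discretization of the supremum over $\theta$.

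First, I would establish the pointwise bound. Fix $\theta \in \RR$ and write $c_k := a_k \ee^{\ii k\theta}$ and $\xi_k = u_k + \ii v_k$, so that $q(\theta) = U + \ii V$ with
\[
U = \sum_{k=0}^n \bigl[\re(c_k)\,u_k - \im(c_k)\,v_k\bigr],\qquad V = \sum_{k=0}^n \bigl[\re(c_k)\,v_k + \im(c_k)\,u_k\bigr].
\]
A direct computation shows that for every $\phi \in \RR$, the real combination $\cos\phi\cdot U + \sin\phi\cdot V$ equals $\sum_k\bigl[\re(\ee^{-\ii\phi}c_k)\,u_k - \im(\ee^{-\ii\phi}c_k)\,v_k\bigr]$, a sum of $2(n+1)$ independent mean-zero subgaussians. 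The independence of $u_k$ from $v_k$, the hypothesis $b_2 \le b_1$, and the identity $|\ee^{-\ii\phi}c_k|^2 = |a_k|^2$ yield an MGF bound of $\exp(t^2 r/2)$, so this combination is subgaussian with factor $\sqrt r$. Taking $\phi = 0, \pi/2$ gives $\P(|U|\ge t), \P(|V|\ge t) \le 2\exp(-t^2/(2r))$, whence
\[
\P(|q(\theta)|\ge s) \le \P\bigl(|U|\ge s/\sqrt 2\bigr) + \P\bigl(|V|\ge s/\sqrt 2\bigr) \le 4\exp\bigl(-s^2/(4r)\bigr).
\]

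Next, I would pass from this pointwise control to a bound on $M := \|q\|_\infty$ via Bernstein's inequality $\|q'\|_\infty \le n\|q\|_\infty$ for trigonometric polynomials of degree $n$. On the equispaced grid $\theta_j = 2\pi j/N$, $j=0,\dots,N-1$, each $\theta \in [0,2\pi)$ lies within $\pi/N$ of some grid point, so the nearest neighbor $\theta_{j^*}$ of the maximizer of $|q|$ satisfies $|q(\theta_{j^*})| \ge \alpha M$ with $\alpha := 1 - n\pi/N$. A union bound then yields
\[
\P(M \ge C) \le N\cdot 4\exp\bigl(-\alpha^2 C^2/(4r)\bigr).
\]
Parameterizing $N = 2\pi(n+1)/\eta$ for $\eta \in (0,1]$ gives $1-\alpha \le \eta/2$ and hence $\alpha^2 \ge 1-\eta+\eta^2/4$; splitting the exponent as $-C^2/(4r) + (1-\alpha^2)C^2/(4r)$ and optimizing $\eta^{-1}\exp(\eta C^2/(4r))$ in $\eta$ produces an optimum of order $\eta_\star \sim 4r/C^2$, which lies in the admissible range precisely because $C > 2\sqrt{2r}$. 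Substituting back produces a bound of the form $2\pi(n+1)\cdot \mathrm{const}\cdot (C^2/r)\exp(-C^2/(4r))$, matching the stated structure.

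The main obstacle is the fine-tuning of the constants $\ee^{3/2}$ and $(C^2/(2r)-1)$ in the claimed estimate. Obtaining these exactly requires two sharpenings: a pointwise estimate along the lines of $\P(|q(\theta)|\ge s) \le 2\exp(-s^2/(2r))$ derived by applying the Chernoff bound directly to $U^2+V^2$ rather than to $|U|$ and $|V|$ separately, and a sharper optimization that retains the quadratic correction $\eta^2/4$ in $\alpha^2 \ge 1-\eta+\eta^2/4$ when solving for $\eta$. The hypothesis $C > 2\sqrt{2r}$ is exactly what is needed to keep the optimal $\eta$ strictly below $1$ and to make the prefactor $C^2/(2r)-1$ positive.
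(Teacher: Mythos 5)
Your approach is essentially the one the paper intends: the paper gives no written-out proof but simply cites Kahane's subgaussian arguments, which are exactly the combination of pointwise MGF tail bounds, Bernstein's inequality for trigonometric polynomials, grid discretization, and optimization over the grid spacing that you employ. Your outline correctly identifies where the threshold $C>2\sqrt{2r}$ comes from (keeping the optimal slack parameter $\eta_\star$ below $1$) and produces a prefactor of the right shape $(n+1)\cdot\text{const}\cdot(C^2/r)$. One caution on the refinement you propose: the sharper pointwise bound $\P(|q(\theta)|\ge s)\le 2\exp(-s^2/(2r))$ via ``Chernoff on $U^2+V^2$'' is not directly available in this setting, because although $\re\xi_k$ and $\im\xi_k$ are independent, the real and imaginary parts $U$ and $V$ of $q(\theta)$ are in general \emph{correlated} (their covariance involves $\sum_k\re(c_k)\im(c_k)\,[\E u_k^2-\E v_k^2]$, which need not vanish), so the product structure $\E[e^{\lambda(U^2+V^2)}]=\E[e^{\lambda U^2}]\E[e^{\lambda V^2}]$ fails. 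Fortunately this does not damage the argument: as your own computation shows, the honest pointwise bound $4\exp(-s^2/(4r))$ together with choosing the grid order $N\sim 2\pi(n+1)C^2/(4r)$ already yields $\P(\|q\|_\infty\ge C)\lesssim 2\pi e(n+1)(C^2/r)\exp(-C^2/(4r))$, which has the same exponent and a prefactor of the same order as the stated bound; the precise numerical constants $\ee^{3/2}$ and $(C^2/(2r)-1)$ are artifacts of Kahane's particular bookkeeping, and retaining the $\eta^2/4$ term in the optimization as you suggest is indeed the correct direction to reproduce them.
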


This lemma can be obtained by modification of the subgaussian arguments of Kahane \cite[Section 6.2]{K85} (which are based on the Salem-Zygmund use of Bernstein inequalities \cite{SZ54}).

From now on, suppose that the noise satisfies the following assumption:
\begin{itemize}
\item[(Nsg)] $\ep_t$ are i.i.d. complex random variables such that for a certain
$\vphi \in [-\pi,\pi)$ the random variables
$\re (\ee^{\ii \vphi }\ep_k)$ and $\im (\ee^{\ii \vphi }\ep_k)$ are
independent and subgaussian with the scaling factors $b_1$ and $b_2$, respectively,
chosen such that $b_1\ge b_2 \ge 0$ and $b_1>0$.
\end{itemize}

Lemma \ref{l P |q|>C} implies that under this assumption
\begin{gather} \label{e P|q|>C_AP1}
\P (\| S^A_{\ep,m} (e^{\ii \th}) \|_\infty \ge C) 
 \le
 \frac{\pi \ee^{3/2}}{b_1^2 \sum_{t=1}^m a_{m,t}^2/t^2} \, m C^2 \exp \left( - \frac{3}{2 \pi^2 b_1^2} C^2 \right)
\end{gather}
for $C \ge 2 \pi b_1 / \sqrt{3} $ (we have used here the facts that $a_{m,t} \le 1$ and $\sum_{t=1}^m a_{m,t}^2/t^2 < \pi^2 / 6$).

Let $O_\De (\supp \mu)$ be the open in the topology of $\TT$ neighborhood of $\supp \mu$ defined by
\[
O_\De (\supp \mu) := \{ z \in \TT \ : \ \dist_\TT (z,\supp \mu) < \De\} , \qquad \De >0 .
\]
By $\A (\wt \alpha,\De)$ we denote the set of all points $z$ of $\supp \mu$ that bear
a complex mass $\alpha$ with $|\alpha| > \wt \alpha$ and satisfy $\dist_\TT (z,\supp \mu \setminus \{ z \}) \ge \De$.

The above bound (\ref{e P|q|>C_AP1}) on the distribution tail of $\| S^A_{\ep,m} (\ee^{\ii \th}) \|_\infty$ can be combined with Proposition \ref{p:P_loc_gen} to obtain, for instance, the following result.

\begin{cor} \label{c:PA_Pdist}
Let $m \ge 3$, $h_m= \ln^{1-\de} m $ with
$\de \in (0,1/2)$, and 
$J_m = \Big\lceil 2 \pi m^{1/2} \Big\rceil $.
Let us consider, for $\wt \alpha>0$ and $\De \in (0, \pi/3)$, the  two following events
\begin{gather*}
B^\A_m (\wt \alpha, \De) := \{ \A (\wt \alpha, \De )  \subset \wh L_m \}, 
\qquad
B^{\dist}_m (\De) := \{  \wh L_m \subset O_\De (\supp \mu)  \}.
\end{gather*}
Then,
\begin{gather*} 
\limsup_{m \to \infty} \frac{ \ln  \left(1- P \left( B^\A_m (\wt \alpha, \De) \right) \right) }{\ln^2 m} \ \le \
-\frac{3 \wt \alpha^2}{8 \pi^2 b_1^2} ,  \\
\limsup_{m \to \infty}
\frac{  \ln \left( 1- P \left( B_m^{\dist} (\De) \right) \right)}{ \ln^{2-2\de} m }
\ \le \
-\frac{3 }{8 \pi^2 b_1^2} .
\end{gather*}
\end{cor}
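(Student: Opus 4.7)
Proof plan. The plan is to combine the deterministic threshold estimates of Proposition~\ref{p:P_loc_gen} with the subgaussian tail bound (\ref{e P|q|>C_AP1}), use a union bound, and then extract the announced logarithmic rates from the explicit choices $H_m = \tfrac{1}{2}\ln m - \tfrac{1}{2}$ (from Proposition~\ref{p seq AP}) and $h_m = \ln^{1-\de} m$. A basic input that I will use throughout is that for fixed $\De \in (0,\pi/3)$ the quantity $f_m(\De - 2\pi/J_m)$ stays uniformly bounded in $m$: this follows from Lemma~\ref{l:APf} and $J_m \to \infty$, so $\De - 2\pi/J_m$ is bounded away from $0$ for large $m$. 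Consequently the subtractive corrections involving $f_m$ and $|\bb|_\infty$ in Proposition~\ref{p:P_loc_gen} contribute only an $O(1)$ term.

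For part (i), I would first bound $1 - \P(B^{\A}_m(\wt\al,\De))$ by a union bound over the (finitely many) points $\ee^{\ii\la_n} \in \A(\wt\al,\De)$. For each such $n$, Proposition~\ref{p:P_loc_gen}(i) combined with $|\al_n| \ge \wt\al$ gives $\P(\ee^{\ii\la_n} \notin \wh L_m) \le \P(\|S^A_{\ep,m}\|_\infty \ge C_m^{(n)})$, where $C_m^{(n)} = |\al_n| H_m - h_m - (\|\mu\| - |\al_n|)(f_m(\De - 2\pi/J_m) + |\bb|_\infty) \ge \tfrac{\wt\al}{2}\ln m\,(1+o(1))$. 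Plugging this into (\ref{e P|q|>C_AP1}) yields a bound of the form $p(m)\exp\!\bigl(-\tfrac{3\wt\al^2 \ln^2 m}{8\pi^2 b_1^2}(1+o(1))\bigr)$ with a polynomial prefactor $p(m)$. Taking logarithms, dividing by $\ln^2 m$, and letting $m \to \infty$ then produces the announced limsup bound.

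For part (ii), the key step is a geometric reduction. If $\wh L_m \not\subset O_\De(\supp\mu)$, then some $w \in \wh L_m$ has $\dist_\TT(w,\supp\mu) \ge \De$, and since $w$ lies in a maximal $(h_m,J_m)$-arc, the nearest grid point $z_0 = \exp(2\pi \ii j/J_m)$ in that arc satisfies $\dist_\TT(z_0,w) \le \pi/J_m$, $\dist_\TT(z_0,\supp\mu) \ge \De - \pi/J_m$, and $|S^A_{x,m}(z_0)| \ge h_m$ by Definition~\ref{d cluster}. A union bound over the at most $J_m = O(\sqrt m)$ such grid points, combined with Proposition~\ref{p:P_loc_gen}(ii) applied with $\De$ replaced by $\De - \pi/J_m$, yields
\[
1 - \P(B^{\dist}_m(\De)) \le J_m \cdot \P\!\bigl( \|S^A_{\ep,m}\|_\infty \ge h_m - \|\mu\|(f_m(\De - 2\pi/J_m) + |\bb|_\infty) \bigr).
\]
The threshold equals $h_m(1+o(1)) = \ln^{1-\de} m\,(1+o(1))$, so applying (\ref{e P|q|>C_AP1}) gives a bound $q(m) \exp\!\bigl(-\tfrac{3\ln^{2-2\de} m}{2\pi^2 b_1^2}(1+o(1))\bigr)$ for some polynomially growing $q(m)$ (the factor $J_m$ is absorbed). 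Since $2-2\de > 1$, dividing by $\ln^{2-2\de} m$ and taking $\limsup$ produces $-\tfrac{3}{2\pi^2 b_1^2}$, which is strictly stronger than the claimed $-\tfrac{3}{8\pi^2 b_1^2}$.

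The main obstacle I foresee is not any single step but rather careful bookkeeping: one must verify that the various $O(1)$ and polynomially growing factors become negligible after dividing by the correct power of $\ln m$, which requires the hypothesis $\de \in (0,1/2)$ precisely to ensure that $\ln^{2-2\de} m$ dominates $\ln m$. The geometric reduction in part (ii) also deserves care but is essentially immediate from the definition of a maximal $(h_m,J_m)$-arc and from the fact that every interior point of such an arc is within $\pi/J_m$ of a grid point where $|S^A_{x,m}|\ge h_m$.
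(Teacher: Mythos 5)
The paper itself provides no proof of Corollary \ref{c:PA_Pdist}; it is stated immediately after the subgaussian bound (\ref{e P|q|>C_AP1}) and Proposition \ref{p:P_loc_gen}, which together form its intended ingredients. Your plan assembles exactly these ingredients in the natural order and is correct; let me record a few points for precision.

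For part (i): you correctly observe that, since $H_m = \tfrac12 \ln m - \tfrac12$, $h_m = \ln^{1-\de}m = o(\ln m)$, and $f_m(\De - 2\pi/J_m) + |\bb|_\infty = O(1)$ (by Lemma \ref{l:APf}, since $\De - 2\pi/J_m$ is bounded away from $0$ for a fixed $\De\in(0,\pi/3)$), the threshold in (\ref{e P lam notin}) is $\tfrac{|\al_n|}{2}\ln m\,(1+o(1))$. Feeding $C = \tfrac{|\al_n|}{2}\ln m\,(1+o(1))$ into the prefactor-times-$\exp(-\tfrac{3}{2\pi^2 b_1^2}C^2)$ form of (\ref{e P|q|>C_AP1}) and noting $\tfrac{3}{2\pi^2 b_1^2}\cdot\tfrac{|\al_n|^2}{4} = \tfrac{3|\al_n|^2}{8\pi^2 b_1^2} \ge \tfrac{3\wt\al^2}{8\pi^2 b_1^2}$ gives the first bound, with the finite union bound over the at most $\|\mu\|/\wt\al$ points of $\A(\wt\al,\De)$ and the polynomial prefactor both washed out after dividing by $\ln^2 m$. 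One should keep in mind the degenerate case $\A(\wt\al,\De)=\varnothing$, where $\P(B^\A_m)=1$ and the left-hand side is $-\infty$.

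For part (ii): your geometric reduction is sound — any $w\in\wh L_m$ lies in a maximal $(h_m,J_m)$-arc, hence is within $\pi/J_m$ of a grid point $z_0$ with $|S^A_{x,m}(z_0)|\ge h_m$, and $\dist_\TT(z_0,\supp\mu)\ge\De-\pi/J_m$. In fact the union bound over grid points you invoke is not even needed: the event $\{\wh L_m\not\subset O_\De(\supp\mu)\}$ directly implies $\|S^A_{\ep,m}\|_\infty \ge h_m - \|\mu\|\bigl(f_m(\De-\pi/J_m)+|\bb|_\infty\bigr)$, which is a single global event, so one bound on $\P(\|S^A_{\ep,m}\|_\infty \ge \cdot)$ suffices. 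In any case your extra factor $J_m = O(\sqrt m)$ is harmless since $\ln J_m = O(\ln m) = o(\ln^{2-2\de}m)$ for $\de<1/2$. With threshold $\ln^{1-\de}m\,(1+o(1))$, (\ref{e P|q|>C_AP1}) yields the rate $-\tfrac{3}{2\pi^2 b_1^2}$, which is, as you note, strictly stronger (more negative) than the claimed $-\tfrac{3}{8\pi^2 b_1^2}$. Since the corollary only asserts an upper bound, your computation establishes it a fortiori; the paper's stated constant appears to be conservative, possibly carrying over the $\tfrac14$ factor from part (i) where $H_m\approx\tfrac12\ln m$ introduces the extra $\tfrac14$. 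No gap in the argument.
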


\vspace{3ex}

\noindent  Illya M. Karabash,
Institute of Applied Mathematics and Mechanics of NAS of Ukraine,
Dobrovolskogo st. 1, Slovyans'k 84100, Ukraine, and 
Humboldt Research Fellow at the University of Bonn,
Endenicher Allee 60, D-53115 Bonn, Germany.\\
E-mail:  i.m.karabash@gmail.com

\vspace{4ex}

\noindent  J\"urgen Prestin,
Institut f{\"u}r Mathematik, Universit{\"a}t zu L{\"u}beck, 
Ratzeburger Allee 160,\linebreak D-23562 L{\"u}beck, Germany.\\
E-mail: prestin@math.uni-luebeck.de

\end{document}